\numberwithin{equation}{section}
\DeclareMathOperator{\Str}{Str}
\DeclareMathOperator{\tr}{tr}
\DeclareMathOperator{\trG}{\tr_{\Gamma}}
\DeclareMathOperator{\StrG}{\Str{}_{\Gamma}}
\DeclareMathOperator{\Tr}{Tr}
\DeclareMathOperator{\Cl}{Cl}
\DeclareMathOperator{\Op}{Op}
\DeclareMathOperator{\scal}{scal}
\DeclareMathOperator{\Diff}{Diff}
\DeclareMathOperator{\UDiff}{UDiff}
\DeclareMathOperator{\spec}{spec}
\DeclareMathOperator{\ch}{ch}
\DeclareMathOperator{\Ch}{Ch}
\newtheorem{proposition}{Proposition}[section]
\newtheorem{lemma}[proposition]{Lemma}
\newtheorem{theorem}[proposition]{Theorem}
\newtheorem{cor}[proposition]{Corollary}
\newtheorem{remark}[proposition]{Remark}
\newtheorem{conjecture}[equation]{Conjecture}
\newtheorem{remnot}[proposition]{Remarks and notation}
\theoremstyle{definition}
\newtheorem{definition}[proposition]{Definition}
\begin{document}

\newenvironment{com}%
{\par \vspace{0cm}\ \\ \noindent \%\color{blue}\% \begin{tiny}}{\par\end{tiny}\%\color{black}\%\vspace{0cm}}
\newcommand{\foot}[1]{\footnote{\begin{normalsize}#1\end{normalsize}}}
\def\dim{\mathop{\rm dim}}
\def\Re{\mathop{\rm Re}}
\def\Im{\mathop{\rm Im}}
\def\I{\mathop{\rm I}}
\def\Id{\mathop{\rm Id}}
\def\grad{\mathop{\rm grad}}
\def\vol{\mathop{\rm vol}}
\def\SU{\mathop{\rm SU}}
\def\SO{\mathop{\rm SO}}
\def\Aut{\mathop{\rm Aut}}
\def\End{\mathop{\rm End}}
\def\GL{\mathop{\rm GL}}
\def\Cinf{\mathop{\mathcal C^{\infty}}}
\def\Ker{\mathop{\rm Ker}}
\def\Coker{\mathop{\rm Coker}}
\def\dom{\mathop{\rm Dom}}
\def\Hom{\mathop{\rm Hom}}
\def\Ch{\mathop{\rm Ch}}
\def\sign{\mathop{\rm sign}}
\def\SF{\mathop{\rm SF}}
\def\loc{\mathop{\rm loc}}
\def\AS{\mathop{\rm AS}}
\def\ev{\mathop{\rm ev}}
\def\id{\mathop{\rm id}}
\def\Cli{\mathbb{C}l(1)}

\def\ds{\displaystyle}
\def\pn{\pi_{n}}
\def\pnu{\pi_{n-1}}
\def\Fi{\Phi}
\def\phi{\varphi}
\def\de{\delta}\def\e{\eta}\def\ep{\epsilon}\def\ro{\rho}\def\a{\alpha}\def\o{\omega}\def\O{\Omega}\def\b{\beta}\def\la{\lambda}\def\th{\theta}\def\s{\sigma}\def\t{\tau}\def\g{\gamma}
\def\D{\Delta}\def\G{\Gamma}
\def\R{\mathbin{\mathbb R}}
\def\Rn{\R^{n}}
\def\C{\mathbb{C}}
\def\Cm{\mathbb{C}^{m}}
\def\Cn{\mathbb{C}^{n}}
\def\w{{\mathchoice{\,{\scriptstyle\wedge}\,}{{\scriptstyle\wedge}}
{{\scriptscriptstyle\wedge}}{{\scriptscriptstyle\wedge}}}}
\def\cA{{\cal A}}\def\cL{{\cal L}}
\def\cO{{\cal O}}\def\cT{{\cal T}}\def\cU{{\cal U}}
\def\cD{{\cal D}}\def\cF{{\cal F}}\def\cP{{\cal P}}\def\cH{{\cal H}}\def\cL{{\cal L}}
\def\cB{{\cal B}}


\newcommand{\n}[1]{\left\| #1\right\|}


\def\Mt{\tilde{M}}
\def\Zt{\tilde{Z}}
\def\Nt{\tilde{N}}
\def\Et{\tilde{E}}
\def\Vt{\tilde{V}}
\def\Xt{\tilde{X}}
\def\N{\mathcal{V}}
\def\Cd{\mathbb{C}^{d}}
\def\Dt{\tilde{D}}
\def\Bt{\tilde{\mathbb{B}}}
\def\Ct{\tilde{C}_t}

\def\Cl{\mathop{\rm Cl}}

\newcommand\Di{D\kern-6.5pt/}
\newcommand\cDi{\mathcal{D}\kern-6.5pt/}
\def\cA{{\cal A}}\def\cL{{\cal L}}
\def\cO{{\cal O}}\def\cT{{\cal T}}\def\cU{{\cal U}}
\def\cD{{\cal D}}\def\cF{{\cal F}}\def\cP{{\cal P}}\def\cH{{\cal H}}\def\cL{{\cal L}}
\def\cB{{\cal B}}
\def\N{\mathcal{V}}

\def\T{\mathcal{T}}


\title[$L^2$-rho form for normal coverings of fibre bundles]{$\mathbf{L^2}$-rho form for normal coverings of fibre bundles}


\begin{abstract}
\noindent 
We define the secondary invariants $L^2$-eta and -rho forms for families of generalized Dirac operators on normal coverings of fibre bundles. On the covering family we assume transversally smooth spectral projections and Novikov--Shubin invariants bigger than $3(\dim B+1)$ to treat the large time asymptotic for general operators. In the case of a bundle of spin manifolds, we study the $L^2$-rho class in relation to the space $\mathcal R^+(M/B)$ of positive scalar curvature vertical metrics.
\end{abstract}

\author{Sara Azzali}
\address{Mathematisches Institut \\Georg-August Universit\"at G\"ottingen}
\email{azzali@uni-math.gwdg.de}
\date{September 4, 2010}
\maketitle

\setlength{\parskip}{\smallskipamount}


\setlength{\parskip}{\medskipamount}

\section{Introduction}

Secondary invariants of Dirac operators are a distinctive issue of the heat equation approach to index theory. The \emph{eta invariant} of a Dirac operator first appeared as the boundary term in the Atiyah--Patodi--Singer index theorem \cite{APS1}: this spectral invariant, highly nonlocal and therefore unstable, became a major object of investigation, because of its subtle relation to geometry. With the introduction of superconnnections in index theory by Quillen and Bismut, it became possible to employ heat equation techniques in higher geometric situations, where the primary invariant, the index, is no longer a number, but a class in a $K$-theory group \cite{Q, Bi, Lo4}. This led to so called \emph{local index theorems}, which are refinements of the cohomological index theorems at the level of differential forms, and gave as new fundamental byproduct the \emph{eta forms}, coming from the transgression of the index class \cite{BC,BF, BC1}, which are the higher analogue of eta invariants \cite{MP, lphigh, Lo3}.  

\emph{Rho invariants} are differences (or, more generally, delocalized parts) of eta invariants, so they naturally possess stability properties when computed for geometrically relevant operators, mainly the \emph{spin Dirac operator} and the \emph{signature operator} \cite{APS2, Ke, PS1}. Furthermore, they can be employed to detect geometric structures: the Cheeger--Gromov $L^2$-rho invariant, for example, has major applications in distinguishing positive scalar curvature metrics on spin manifolds \cite{BG,PS2}, and can show the existence of infinitely many manifolds that are homotopy equivalent but not diffeomorphic to a fixed one \cite{CW}.

As secondary invariants always accompany primary ones, it is very natural to ask what are the \emph{$L^2$-eta} and \emph{$L^2$-rho forms} in the case of a families, and what are their properties. 

We consider the easiest $L^2$-setting one could think of, namely a \emph{normal covering of a fibre bundle}. This interesting model contains yet all the features and problems offered by the presence of continuos spectrum.
 Since the fibres of the covering family are noncompact, the large time asymptotic of the \emph{superconnection Chern character} is in general not converging to a differential form representative of the index class, and the same problem is reflected when trying to integrate on $[1,\infty)$ the transgression term involved in the definition of the $L^2$-eta form. 

The major result in this sense is by Heitsch and Lazarov, who gave the first families index theorem for foliations with Hausdorff graph \cite{HL}. They computed the large time limit of the superconnection Chern character as Haefliger form, assuming smooth spectral projections and Novikov--Shubin invariants bigger than $3$ times the codimension of the foliation. 
Their result implies an index theorem in Haefliger cohomology (not a local one, because they do not deal with the transgression term), which in particular applies to the easier $L^2$-setting under consideration. 

We use the techniques of Heitsch--Lazarov to investigate the integrability on $[1,\infty)$ of the transgression term, in order to define the $L^2$-eta form for families $\mathcal D$ of generalised Dirac operators on normal coverings of fibre bundles. 
Our main result, Theorem \ref{th-eta}, implies that the $L^2$-eta form  $\hat{\eta}_{(2)}(\mathcal D)$ is well defined as a continuos differential form on the base $B$ if the spectral projections of the family $\mathcal D$ are smooth, and the families Novikov--Shubin invariants  $\{\a_{K}\}_{K\subset B}$ are greater than $3(\dim B+1)$.  

We define then naturally the $L^2$-rho form $\hat{\rho}_{(2)}(\mathcal D)$ as the difference between the $L^2$-eta form for the covering family and the eta form of the family of compact manifolds.
When the fibre is odd dimensional, the zero degree term of $\hat{\rho}_{(2)}(\mathcal D)$ is the Cheeger--Gromov $L^2$-rho invariant of the induced covering of the fibre. We prove that the $L^2$-form is (weakly) closed when the fibres are odd dimensional (Prop. \ref{wclosed}).

The strong assumptions of Theorem \ref{th-eta} are required because we want to define $\hat{\eta}_{(2)}$ for a family of  generalised Dirac operators. In the particular case of de Rham and signature operators one can put weaker assumptions: this is showed by Gong--Rothenberg's result for the $L^2$-Bismut--Lott index theorem  (proved under positivity of the Novikov--Shubin invariants) \cite{GR}, and from results in \cite{AGS}, where we develop a new approach to large time estimate exclusive to the families of de Rham and signature operators. On the contrary, a family of signature operators twisted by a fibrewise flat bundle has to be treated as a general Dirac operator \cite{BH3}.

Next we investigate the $L^2$-rho form in relation to the space $\mathcal R^+(M/B)$ of positive scalar curvature vertical metrics for a fibre bundle of spin manifolds. For this purpose, the Dirac families $\cDi$ involved are uniformly invertible by Lichnerowicz formula, so that the definition of the $L^2$-rho form does not require Theorem \ref{th-eta}, but follows from classical estimates. 
Here the $L^2$-rho form is always closed, and we prove the first step in order to use this invariant for the study of $\mathcal R^+(M/B)$, namely that the class $[\hat{\rho}_{(2)}(\cDi)]$ is the same for metrics in the same concordance classes of $\mathcal R^+(M/B)$ (Prop.\ref{conc}). The action of a fibrewise diffeomorphism is also taken into account.

Along the lines of \cite{PS1} we can expect that if $\G$ is torsion-free and satisfies the Baum--Connes conjecture, then the $L^2$-rho class of a family of odd signature operators is an oriented $\G$- fibrewise homotopy invariant, and that $[\hat{\rho}_{(2)}(\tilde{\cDi}_{\hat g})]$ vanishes correspondingly to a vertical metric $\hat{g}$ of positive scalar curvature.
%

{\bf Acknowledgements}
This work was part of my researches for the doctoral thesis. I would like to thank Paolo Piazza for having suggested the subject, for many interesting discussions and for the help and encouragement. I wish to express my gratitude to Moulay-Tahar Benameur for many interesting discussions.




\section{Geometric families in the $L^2$-setting}

We recall local index theory's machine, here adapted to the following $L^2$-setting for families.

\begin{definition}\label{norcov}
Let $\tilde{\pi}\colon\tilde{M}\rightarrow B$ be a smooth fibre bundle, with typical fibre $\tilde{Z}$ connected, and let $\G$ be a discrete group acting fibrewise freely and properly discontinuosly on $M$, such that the quotient $M=\tilde{M}/\G$ is a fibration $\pi\colon M\rightarrow B$ with compact fibre $Z$. Let $p\colon \tilde{M}\rightarrow \tilde{M}/\G=M$ denote the covering map. This setting will be called a \emph{normal covering of the fibre bundle $\pi$} and will be denoted with the pair $(p\colon \tilde{M}\rightarrow M, \pi\colon M\rightarrow B)$.
\end{definition}


Let $\pi \colon M\rightarrow B$ be endowed with the structure of a \textit{geometric family} $(\pi\colon M\rightarrow B, g^{M/B}, \mathcal V, E)$, meaning by definition:\begin{itemize}
  \item $g^{M/B}$ is a given metric on the vertical tangent bundle $T(M/B)$
  \item $\mathcal V$ the choice of a smooth projection $\mathcal V\colon TM\rightarrow T(M/B)$ (equivalently, the choice of a horizontal complement $T^H M=\Ker \mathcal V$)
  \item $E\rightarrow M$ is a \emph{Dirac bundle}, i.e. an Hermitian vector bundle of vertical Clifford modules, with unitary action $c\colon\C l(T^*(M/B),g^{M/B})\rightarrow \End(E)$, and Clifford connection $\nabla^E$.
\end{itemize}

To a gemetric family it is associated a \emph{family $\mathcal{D}=(D_b)_{b\in B}$ of Dirac operators} along the fibres of $\pi$, $D_b
=c_b\circ \nabla^{E_b} \colon \mathcal{C}^\infty(M_b, E_b)\rightarrow \mathcal{C}^\infty(M_b, E_b)$, where $M_b=\pi^{-1}(b)$, and $E_b:=E_{|M_b}$.

If we have a normal $\G$-covering $p\colon \Mt\rightarrow M$ of the fibre bundle $\pi$, the pull back of the geometric family via $p$ gives a \emph{$\G$-invariant geometric family} which we denote $(\tilde{\pi} \colon  \tilde{M}\rightarrow B, p^*g^{M/B}, \tilde{\mathcal V}, \Et)$.

\subsubsection{The Bismut superconnection}
The structure of a geometric family gives a distinguished metric connection $\nabla^{M/B}$ on $T(M/B)$, defined as follows: fix any metric $g_B$ on the base and endow $TM$ with the metric $g=\pi^*g_B\oplus g_{M/B}$; 
let $\nabla^g$ the Levi-Civita connection on $M$ with respect to $g$; the connection $\nabla^{M/B}:=\mathcal{V}\nabla^g\mathcal{V}$ on the vertical tangent does not depend on $g_B$ (\cite[Prop. 10.2]{BGV}
). 

When $X\in \mathcal{C}^\infty(B,TB)$, let $X^H$ denote the unique section of $T^HM$ s.t. $\pi_*X^H=X$.
 For any $\xi_1, \xi_2\in \mathcal{C}^\infty(B, TB)$ let
$
T(\xi_1,\xi_2):=[\xi^H_1,\xi^H_2]-[\xi_1,\xi_2]^H
$
and let $\de\in \mathcal{C}^\infty(M, (T^HM)^*)$ measuring the change of the volume of the fibres
$\mathcal{L}_{\xi^H}\vol=:\de(\xi^H)\vol
$.
Following the notation of \cite{BGV}, in formulas in local expression we denote as $e_1,\dots , e_n$ a local orthonormal base of the vertical tangent bundle; $f_1,\dots f_m$ will be a base of $T_y B$ and $dy^1,\dots, dy^m$ will denote the dual base. The indices $i,j,k..$ will be used for vertical vectors, while $\a,\b,\dots$ will be for the horizontal ones.
The $2$-form 
$ c(T)=\sum_{\a<\b}(T(f_\a,f_\b), e_i)e_i dy^\a dy^\b
$ has values vertical vectors. Using the vertical metric, $c(T)(f_\a, f_\b)$ can be seen as a cotangent vertical vector, hence it acts on $E$ via Clifford multiplication.

Let $\mathcal{H}\rightarrow B$ be the infinite dimensional 
 bundle with fibres $\mathcal{H}_b=\mathcal{C}^\infty(M_b,E_b)$. Its space of sections is given by
$ \mathcal{C}^\infty (B,\mathcal{H})= \mathcal{C}^\infty (M,E)$. We denote $\O(B,\mathcal{H}):=\mathcal{C}^\infty(M,\pi^*(\Lambda T^* B)\otimes E)$. 
Let $\nabla^{\mathcal{H}}$ be the connection on  $\mathcal{H}\rightarrow B$ defined by
$
\nabla^{\mathcal{H}}_{U}\xi=\nabla^E_{U^H}\xi+\frac{1}{2}\delta(\xi^H)
$
where $\xi$ is on the right hand side is regarded as a section of $E$. $\nabla^{\mathcal{H}}$ is compatible with the inner product 
$<s,s'>_b:=\displaystyle\int_{Z_b}h^E(s,s')\vol{}_b\,$, with $s, s'\in \mathcal{C}^\infty (B,\mathcal H)$, and $h^E$ the fixed metric on $E$. 

\paragraph{\textit{Even dimensional fibre}}
When $\dim F=2l$ the bundle $E$ is naturally $\mathbb{Z}_2$-graded by chiraliry, $E=E^+\oplus E^-$, and $\mathcal D$ is odd. Correspondingly, the infinite dimensional bundle is also $\mathbb{Z}_2$-graded: $\mathcal{H}=\mathcal{H}^+\oplus \mathcal{H}^-$. The \textit{Bismut superconnection} adapted to $\mathcal{D}$ is the superconnection $\ds
\mathbb{B}=\nabla^{\mathcal{H}}+\mathcal{D}-\frac{c(T)}{4}$
on $\mathcal H$.

The corresponding bundle for the covering family $\tilde{\pi}$ is denoted $\tilde{\mathcal H}\rightarrow B$ where the same construction for the family $\Mt\rightarrow B$ gives the Bismut superconnection $\mathbb{\tilde{B}}=\ds\nabla^{\mathcal{\tilde{H}}}+ \mathcal{\tilde{D}}-\frac{c(\tilde{T})}{4}
$, adapted to $\mathcal{\tilde{D}}$. It is $\G$-invariant by construction, being 
the pull-back via $p$ of $\mathbb{B}$.

\paragraph{\textit{Odd dimensional fibre}}
When $\dim Z=2l-1$, the appropriate notion is the one of $\Cli$-superconnection, as introduced by
Quillen in \cite[sec. 5]{Q}.  
Let $\Cli$ the Clifford algebra $\Cli=\mathbb{C}\oplus \mathbb{C}\s$, where $\s^2=1$, and consider $\End E\otimes \Cli$, adding  
therefore the extra Clifford variable $\sigma$. 
On $\End(E_b)\otimes \Cli=\End_\s(E_b\oplus E_b)$ define the supertrace
$\tr{}^\s(A+B\s):=\tr{}B$,
%
extended then to $\tr{}^\s\colon  \mathcal{C}^\infty(M, \pi^* \Lambda^*B\otimes \End E)\rightarrow \O(B)$ as usual by 
$
\tr{}^\s(\o\otimes (a+b\s))=\o \tr b
$, 
 for $\o\in \mathcal{C}^\infty(B, \Lambda T^*B)$, $\forall a,b\in \mathcal{C}^\infty(B,\End E)$.

The family $\mathcal{D}$, as well as $c(T)$ are even degree elements of the algebra $\mathcal{C}^\infty(B, \End \mathcal{H}\otimes \Cli\hat{\otimes}\Lambda T^*B)$. On the other hand, $\nabla^\mathcal H$ is odd.
By definition, the \emph{Bismut $\Cli$-superconnection} adapted to the family $\mathcal{D}$ is the operator  of odd total degree
$\mathbb{B}^\s:=\ds\mathcal{D}\s+\tilde{\nabla}^u-\frac{c(T)}{4}\s$.
\paragraph {\textit{Notation.}} In the odd case we will distinguish between the $\Cli$-superconnection defined above $\mathbb{B}^\s$ acting on $\Omega(B,\mathcal{H})\,\hat{\otimes}\,\Cli$, and the differential operator 
$\mathbb{B}\colon \Omega(B,\mathcal{H})\rightarrow \Omega(B,\mathcal{H})$ given by $
\mathbb{B}:=\mathcal{D}+\nabla^\mathcal H-\frac{c(T)}{4}
$, which is not a superconnection but is needed in the computations.


\subsection{The heat operator for the covering family} \label{sec.inv}

In this section we briefly discuss the construction of the heat operator $e^{-\tilde{\mathbb{B}}^2}$, which can be easily performed combining the usual construction for compact fibres families in \cite[Appendix of Chapter 9]{BGV}, with Donnelly's construction for the case of a covering of a compact manifolds \cite{Do}.
We integrate notations of \cite[Ch. 9-10]{BGV} with the ones of our appendix \ref{app1}. We refer to the latter for the definitions of the spaces of operators used the rest of this section.

Let $\mathcal{C}^\infty(B,\Diff_\G(\tilde{E}))$ the algebra of smooth maps $D\colon B\rightarrow \Diff_\G(\tilde{E})$ satisfying that $\forall z\in B$, $D_z$ is a $\G$-invariant differential operator on $\Mt_z$, with coefficients depending smoothly on the variables of $B$.
In the same way, let $\mathcal{N}=\mathcal{C}^\infty(B,\Lambda T^*B\otimes\Op_\G^{-\infty}(\tilde{E}))=\O(B,\Op_\G^{-\infty}(\tilde{E}))$ the space of smooth maps $A \colon B \rightarrow\Lambda T^*B\otimes \Op_\G^{-\infty}(\tilde{E})$.
$\mathcal N$ contains families of $\G$-invariant operators of order $-\infty$ with coefficients differential forms, hence $\mathcal{N}$ is filtered by $\mathcal{N}_i= \mathcal{C}^\infty(B,\bigoplus_{j\geq i}\Lambda^j T^*B\otimes \Op_\G^{-\infty}(\tilde{E}))$.
The curvature of $\mathbb{\tilde{B}}$ is a family $\mathbb{\tilde{B}}^2\in \O(B, \Diff^2_\G(\tilde{E}))$ and can be written as $\mathbb{\tilde{B}}^2=
\tilde{D}^2-\tilde{C}$, with 
$\tilde{C}\in \O^{\geq 1}(B, \Diff_\G^1(\Et))$.

\subsubsection{Definition and construction}
For each point $z\in B$ the operator $e^{-t\Bt^2_{z}}$ is by definition an 
the one whose Schwartz kernel $\tilde{p}^z_t(x,y)\in \tilde{E}_x\otimes \tilde{E}^*_y\otimes \Lambda T_z^*B$ is the fundamental solution of the heat equation, i.e.
\begin{itemize}
  \item $\tilde{p}_t^z(x,y)$ is $C^1$ in $t$, $C^2$ in $x, y$; 
  \item $\ds \frac{\partial }{\partial t} \tilde{p}_t^z(x,y)+\Bt^2_{z,II} \tilde{p}_t^z(x,y)=0$ where $\Bt_{z,II}$ means it acts on the second variable;
\item $\ds \lim_{t\rightarrow 0} \tilde{p}_t^z(x,y)=\delta(x,y)$
\item $\forall T>0$ $\forall t\leq T$ $\exists \,c(T): \n{\partial_t^i\partial_x^j\partial_y^k p_t(x,y)}\leq ct^{-\frac{n}{2}-i-j-k} e^{-\frac{d^2(x,y)}{2}}, \,\,0\leq i,j,k\leq 1$.
\end{itemize}
Its construction is as follows: pose
\begin{equation}\label{heat}
e^{-t\Bt_{z}^2}:=e^{-t\tilde{D}_z^2}+\sum_{k>0} \int_{\triangle_k} t^k\underbrace{e^{-\s_0 t \Dt^2_z } \tilde{C} e^{-\s_1 t \Dt^2_z} \dots \tilde{C} e^{-\s_k t \tilde{D}^2_z}}_{I_k} d\s_1\dots d\s_k
\end{equation}
Since $\forall \s=(\s_0,\dots,\s_k)$ there exists $\s_i>\frac{1}{k+1}$, then each term $I_k\in \Lambda T^*_z B\otimes \Op^{-\infty}(\Et_z)$ and so does $e^{-t\Bt_{z}^2}$.
Let $\tilde{p}^z_t(x,y)=[e^{-\Bt^2_{t,z}}](x,y)$ be the Schwartz kernel of the operator \eqref{heat}. Using arguments of   \cite[theorems 9.50 and 9.51]{BGV}, one proves that $\tilde{p}_t^z(x,y)$ is smooth in $z\in B$ so that one can conclude
$e^{-\Bt}\in\O(B, \Op^{-\infty}_\G)$.

The next property, proved in \cite{Do} and \cite{Do2}, is needed in the $t\rightarrow 0$ asymptotic.
For $t<T_0$
\begin{equation}\label{decad.esp}
\left|[e^{-t\Bt^2}](\tilde{x},\tilde{y})\right|\leq c_1t^{-\frac{n}{2}}\ds e^{-c_2\frac{d^2(\tilde{x},\tilde{y})}{t}}
\end{equation}
\subsection{Transgression formul\ae, eta integrands}
For $ t>0$ let $\de_t\colon \Omega(B,\mathcal{H})\rightarrow \Omega(B,\mathcal{H})$ the operator which on $\Omega^i(B,\mathcal{H})$ is multiplication by $t^{-\frac{i}{2}}$. Then consider the rescaled superconnection $\ds\mathbb{B}_t=t^{\frac{1}{2}}\de_t\mathbb{B}\de_t^{-1}=\nabla^{\mathcal{H}}+\sqrt t \mathcal{D}-c(T)\frac{1}{4\sqrt t}$.
\subsubsection{Even dimensional fibre}
From \eqref{trace} we have
$\ds\frac{d}{dt}\Str{}_{\G} e^{-\Bt^2_t}=-d \Str{}_\G\left(\frac{d \Bt}{dt}e^{-\Bt^2_t}\right)$ which on a finite interval $(t,T)$ gives the \textit{transgression formula}
\begin{equation}\label{transg}
\Str{}_\G\left( e^{-\tilde{\mathbb{B}}^2_T}\right)-\Str{}_\G\left( e^{-\tilde{\mathbb{B}}^2_t}\right)=-d \int_t^T\Str{}_\G\left( \frac{d \tilde{\mathbb{B}}_s}{ds}e^{-\tilde{\mathbb{B}}_s^2}\right)ds\end{equation}

\subsubsection{Odd dimensional fibre}
Here it is convenient to use that
$\ds\tr{}^\s_\G e^{-(\mathbb{\tilde{B}}^\s_t)^2}=\tr{}^{odd}_\G e^{-\mathbb{\tilde{B}}_t^2}$,  (from \cite{Q} and \eqref{trace}), where $\tr^{odd}$ means we take the odd degree part of the resulting form. Then taking the odd part of the formula $\ds
\frac{\partial}{\partial t}\tr{}_\G e^{-\mathbb{B}_t^2}=-d\tr{}_\G \left(\frac{\partial \mathbb{B}_t}{\partial t} e^{-\mathbb{B}_t^2}\right)$ 
\begin{equation}\label{transgs}
\Tr{}^{odd}_\G\left( e^{-\tilde{\mathbb{B}^\s}^2_T}\right)-\Tr{}^{odd}_\G\left( e^{-\Bt^2_t}\right)=-d \int_t^T\Tr{}^{even}_\G\left( \frac{d \Bt_s}{ds}e^{-\Bt_s^2}\right)ds \end{equation}

\begin{remnot}\label{conv-forms} Since we wish now to look at the limits as $t\rightarrow 0$ and $t\rightarrow \infty$ in \eqref{transg} and \ref{transgs}, let us make precise what the convergences on the spaces of forms are, and for families of operators. On $\O(B)$ we consider the topology of convergence on compact sets. We say a family of forms $\o_t\stackrel{C^0}{\rightarrow} \o_{t_0}$ as $t\rightarrow t_0$ if $\forall K\stackrel{cpt}{\subseteq} B$ $\,\sup_{z\in K}\n{\o_t(z)-\o_{t_0}(z)}_{\Lambda T_z^*B}\rightarrow 0$. We say $\o_t\stackrel{C^1}{\rightarrow} \o_{t_0}$ if the convergence also hold for first derivatives of $\o_t$ with respect to the base variables. 
We say $\o_t=\mathcal O(t^\de)$ as $t\rightarrow \infty$ if $\exists$ a constant $C=C(K)$ : $\,\sup_{z\in K}\n{\o_t(z)-\o_{t_0}(z)}_{\Lambda T_z^*B}\leq C t^\de$. We say $\o_t\stackrel{C^1}{=}\mathcal O(t^\de)$ if also the first derivatives with respect to base directions are $\mathcal O(t^\de)$.

For a family $T_t\in U\mathcal{C}^\infty (B, \Op^{-\infty}(\Et))$ we say $T_t\stackrel{\mathcal{C}^k}{\rightarrow} T_{t_0}$ as $t\rightarrow t_0$ if $\forall K\stackrel{cpt}{\subseteq} B$, $\forall r,s \in \mathbb{Z}$ $\sup_{z\in K}\n{T_t(z)-T_{t_0}(z)}_{r,s}\rightarrow 0$ together with derivatives up to order $k$ with respect to the base variables. 

On the space of kernels $U\mathcal{C}^\infty(\Mt\times_B\Mt, \Et\XBox\Et^*\otimes \pi^*\Lambda T^*B)$, we say $k_t\rightarrow k_{t_0}$ if $\forall \phi \in C_c^\infty(B)$  $\n{(\pi^*\phi(x))(k_t(x,y)-k_{t_0}(x,y))}_k\rightarrow 0$. 

We stress that from \eqref{C0sob} the map  $\,\O(B,\Op_\G^{-\infty}(\Et))\rightarrow U\mathcal{C}^\infty(\Mt\times_B\Mt, \Et\XBox\Et^*\otimes \pi^*\Lambda T^*B)$, $T\mapsto [T]$ is continuos.
\end{remnot}
\subsection{The $t\rightarrow 0$ asymptotic}\label{sec.loc}
 \begin{proposition}\label{teoloc}
\begin{eqnarray*}
\lim_{t\rightarrow 0}  \Str{}_\G \left(e^{-\mathbb{\mathbb{\tilde{B}}}_t^2}\right) =\int_{M/B} \hat{A}(M/B)\ch E/S \;\; \text{if}\; \dim \tilde{Z}=\text{ even}
\\
\lim_{t\rightarrow 0}  \tr{}^{odd}_\G \left(e^{-\mathbb{\mathbb{\tilde{B}}}_t^2}\right) =\int_{M/B} \hat{A}(M/B)\ch E/S 
\;\; \text{if}\; \dim \tilde{Z}=\text{ odd}
\end{eqnarray*}
\end{proposition}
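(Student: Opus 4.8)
The plan is to reduce the $t\to 0$ asymptotic for the covering family to the classical local index theorem for families of compact fibre bundles, exploiting the fact that the heat kernel on the covering is, for small time, essentially local and $\G$-invariant. The first step is to observe that the $\G$-trace $\StrG$ is computed by integrating the pointwise supertrace of the Schwartz kernel $\tilde p^z_t(\tilde x,\tilde x)$ over a fundamental domain $\mathcal F_z\subset \Mt_z$, which we may identify with the compact fibre $M_z$ via the covering map $p$. Thus $\StrG(e^{-\Bt_t^2})(z)=\int_{\mathcal F_z}\str\big(\tilde p^z_t(\tilde x,\tilde x)\big)\,\vol$, and the problem becomes the study of the small-time behaviour of the \emph{diagonal} of the rescaled heat kernel.

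The second, and central, step is a finite propagation / locality argument: because the rescaled Bismut superconnection $\Bt_t$ is built from the Dirac operator $\tilde D$ and local geometric data (the curvature terms $c(\tilde T)$, the connection $\nabla^{\tilde{\mathcal H}}$), its heat kernel satisfies the Gaussian off-diagonal estimate \eqref{decad.esp}, which is the pull-back of Donnelly's estimate. This Gaussian decay, together with the fact that all the coefficients of $\Bt_t^2$ are $\G$-invariant and hence pulled back from $M$, shows that $\tilde p^z_t(\tilde x,\tilde x)$ differs from the corresponding compact-fibre heat kernel diagonal $p^z_t(p(\tilde x),p(\tilde x))$ by a term that is $O(t^\infty)$ uniformly on compact subsets of $B$ (contributions from non-trivial deck transformations $\gamma\neq e$ carry a factor $e^{-c\,d(\tilde x,\gamma\tilde x)^2/t}$ with $d(\tilde x,\gamma\tilde x)$ bounded below on the fundamental domain). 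Concretely, one writes $\tilde p^z_t$ via the Duhamel/Volterra expansion \eqref{heat} and the parametrix construction from \cite[Ch.~9]{BGV}, notes that the parametrix is local (supported near the diagonal) and therefore identical on $\Mt$ and on $M$, and controls the remainder by the Gaussian bounds.

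Having identified the covering $\G$-trace diagonal with the ordinary fibrewise heat kernel diagonal up to negligible terms, the third step is simply to quote the Bismut local index theorem for families of compact manifolds (\cite[Ch.~10]{BGV}): $\lim_{t\to 0}\str(p^z_t(x,x))\,\vol = \big[\hat A(M/B)\,\ch(E/S)\big]$ fibrewise, whence integration over $\mathcal F_z\cong M_z$ yields $\int_{M/B}\hat A(M/B)\,\ch(E/S)$. In the odd-dimensional case the same argument applies after replacing $\str$ by $\tr^{odd}$ and using the identity $\tr^\sigma_\G e^{-(\Bt^\sigma_t)^2}=\tr^{odd}_\G e^{-\Bt_t^2}$ already recorded before \eqref{transgs}, so that the $\Cli$-superconnection computation collapses to the even-type local computation with one extra Clifford variable $\sigma$, exactly as in \cite[sec.~5]{Q}.

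The main obstacle is the second step: making rigorous that the non-compactness of the fibres does not spoil the small-time limit. One must ensure the Gaussian estimate \eqref{decad.esp} holds with constants \emph{locally uniform in $z\in B$} (so that the $O(t^\infty)$ error is uniform on compacta, as required by the notion of convergence fixed in Remarks and Notation \ref{conv-forms}), and that the Volterra series \eqref{heat} for $e^{-t\Bt_z^2}$ can be compared term-by-term with its compact counterpart — each factor $e^{-\sigma_i t\tilde D_z^2}$ being the $\G$-covering heat semigroup, whose kernel again satisfies Gaussian bounds, so that the finitely many curvature insertions $\tilde C\in\O^{\ge1}(B,\Diff^1_\G(\Et))$ (which are $\G$-invariant and uniformly bounded on compacta) do not disturb the locality. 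Once this uniform locality is in place, the passage to the limit and the identification with $\int_{M/B}\hat A\,\ch(E/S)$ are formal consequences of \cite{BGV}.
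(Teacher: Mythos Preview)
Your proposal is correct and follows essentially the same route as the paper: reduce to the compact-fibre local index theorem of \cite[Ch.~10]{BGV} by showing that the diagonal of the covering heat kernel differs from that of the compact-fibre kernel by a term vanishing to all orders as $t\to 0$. The paper isolates precisely your ``main obstacle'' as a lemma (due to Lott \cite{Lo2}), stating $\left|[e^{-\mathbb{B}^2_t}](p(\tilde x), p(\tilde x))-[e^{-\Bt^2_t}](\tilde x, \tilde x)\right|=\mathcal{O}(t^{-c}e^{-A/t})$, which is exactly the Gaussian deck-transformation argument you outline.
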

The result is proved exactly as in the classic case of compact fibres, together with the following argument of \cite[Lemma 4, pag. 4]{Lo2}:
\begin{lemma}\cite{Lo2}
$\exists A>0, c>0$ s.t. 
\begin{equation*}
\left|[e^{-\mathbb{B}^2_t}](\pi(\tilde x), \pi(\tilde x))-[e^{-\Bt^2_t}](\tilde x, \tilde x)\right|=\mathcal{O}(t^{-c}e^{-\frac{A}{t}})
\end{equation*}
\end{lemma}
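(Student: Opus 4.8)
The plan is to compare the heat kernel of $\mathbb B_t^2$ on the compact fibre bundle $M\to B$ with that of $\tilde{\mathbb B}_t^2$ on the covering family $\tilde M\to B$, evaluated on the diagonal, and to show that the difference decays superpolynomially as $t\to 0$. The idea is the classical finite-propagation-speed / unit-propagation argument of Cheeger--Gromov--Taylor, adapted here in the superconnection setting as in \cite{Lo2}. First I would fix $z\in B$ and a point $\tilde x\in\tilde M_z$ with $x=p(\tilde x)\in M_z$. Writing $\tilde{\mathbb B}_z^2=\tilde D_z^2-\tilde C$ and $\mathbb B_z^2=D_z^2-C$ as in Section \ref{sec.inv}, one expands both heat operators via the Duhamel-type series \eqref{heat}; since $p$ is a local isometry intertwining $\tilde C$ with $C$, the difference of the two series is governed entirely by the difference of the scalar heat kernels $e^{-st\tilde D_z^2}$ and $e^{-st D_z^2}$ on and near the diagonal, together with the off-diagonal Gaussian bounds \eqref{decad.esp} on the covering (and their analogues on the compact fibre). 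So it suffices to prove the estimate for the Dirac Laplacians $\tilde D_z^2$ and $D_z^2$ themselves, uniformly for $z$ in a compact set and for $st$ in a bounded interval; the coefficient-forms in $\tilde C$ only contribute bounded factors.

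The key step is the following. Using the spectral theorem, write $e^{-\tau \tilde D_z^2}$ (resp. $e^{-\tau D_z^2}$) as a Fourier integral $\int_{\R}\hat f_\tau(\lambda)\cos(\lambda \tilde D_z)\,d\lambda$ with $f_\tau(x)=e^{-\tau x^2}$, so that $\hat f_\tau$ is a Gaussian of width $\sim\sqrt\tau$. By finite propagation speed for the wave operator $\cos(\lambda D_z)$ along the fibre, the part of the integral with $|\lambda|\le \varepsilon$ (for $\varepsilon$ smaller than the injectivity radius of the fibre) has Schwartz kernel at $(\tilde x,\tilde x)$ that descends exactly to the corresponding kernel for $D_z$ at $(x,x)$ — the local geometries agree since $p$ is a covering by local isometries. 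Hence the difference $[e^{-\tau D_z^2}](x,x)-[e^{-\tau\tilde D_z^2}](\tilde x,\tilde x)$ equals a difference of two integrals over $|\lambda|>\varepsilon$, each bounded (after controlling $\cos(\lambda D_z)$ in the relevant operator norms via elliptic estimates, uniformly in $z\in K$) by $\int_{|\lambda|>\varepsilon}|\hat f_\tau(\lambda)|\,d\lambda\cdot(\text{polynomial in }\tau^{-1})$, which is $\mathcal O(\tau^{-c}e^{-A/\tau})$ for constants $A,c>0$ depending only on $\varepsilon$ and $K$. Reassembling the Duhamel series and integrating over the simplices $\triangle_k$ — the $e^{-A/(\sigma_i t)}$ factors combine to $e^{-A/t}$ up to adjusting constants, and the series converges by the usual argument since some $\sigma_i\ge 1/(k+1)$ — gives the claimed bound $\mathcal O(t^{-c}e^{-A/t})$ for the full superconnection heat kernels.

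The main obstacle is the uniformity in the base variable $z$: the injectivity radius of the fibres, the constants in the elliptic/Gaussian estimates, and the operator-norm bounds on $\cos(\lambda D_z)$ must all be controlled locally uniformly in $z\in B$ (and, for the $C^1$ statements elsewhere in the paper, one would also need control of the $z$-derivatives). This is handled by working over a relatively compact neighbourhood $K\subset B$, using smoothness of all the geometric data in $z$ and compactness of the fibre $Z$, exactly as in the passage from \cite[Thm. 9.50, 9.51]{BGV} to families statements; the off-diagonal bounds \eqref{decad.esp} on the covering side are quoted from \cite{Do,Do2} and already come with such uniform constants. Once this uniformity is in place the argument is the standard one of \cite{Lo2}, and the statement follows.
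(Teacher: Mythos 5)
The paper does not give its own proof of this lemma; it simply cites Lott, Gong--Rothenberg and the author's thesis, having just recorded the off-diagonal Gaussian bound \eqref{decad.esp} precisely for this purpose. Your proof is correct, but it takes the more laborious route. You use the Cheeger--Gromov--Taylor machinery: the Fourier representation $e^{-\tau D^2}=\int\hat f_\tau(\lambda)\cos(\lambda D)\,d\lambda$, finite propagation speed for the wave group, and a local-isometry matching for $|\lambda|<\varepsilon$. This works and is in the spirit of Lott's original derivation, but two points deserve attention. First, the Duhamel reduction ``the difference of the two series is governed entirely by the difference of the scalar heat kernels'' is sketchier than it looks: each Duhamel term $e^{-\sigma_0 t D^2}C e^{-\sigma_1 t D^2}\cdots Ce^{-\sigma_k t D^2}$ involves integration over intermediate fibre points, and to compare it with its covering counterpart you must split each intermediate integration into a near-diagonal region (where the local isometry kills the difference) and a far region (controlled by Gaussian decay); it is not simply a comparison of scalar heat kernels with $C$ ``contributing bounded factors''. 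Second, you never use the fact that $\tilde C\in\Omega^{\ge 1}(B,\Diff^1_\G(\Et))$, so the Duhamel series actually terminates after $\dim B$ steps; invoking ``some $\sigma_i\ge 1/(k+1)$'' for convergence is unnecessary here and slightly misleading.

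The route the paper tees up is shorter. Since $\tilde{\mathbb B}$ is $\G$-invariant and the operators on $M$ are obtained by descent, the kernels satisfy the covering sum formula
\begin{equation*}
[e^{-\mathbb B_t^2}](\pi(\tilde x),\pi(\tilde x))=\sum_{\gamma\in\G}[e^{-\tilde{\mathbb B}_t^2}](\tilde x,\gamma\tilde x),
\end{equation*}
so that the difference you want is exactly $\sum_{\gamma\neq e}[e^{-\tilde{\mathbb B}_t^2}](\tilde x,\gamma\tilde x)$. Applying \eqref{decad.esp} term by term and using $d(\tilde x,\gamma\tilde x)\ge d_0>0$ for $\gamma\neq e$ (uniformly on compacts in $B$, by properness of the action and compactness of the fibre) gives
\begin{equation*}
\Bigl|\sum_{\gamma\neq e}[e^{-\tilde{\mathbb B}_t^2}](\tilde x,\gamma\tilde x)\Bigr|\le c_1 t^{-n/2}\sum_{\gamma\neq e}e^{-c_2 d^2(\tilde x,\gamma\tilde x)/t}=\mathcal O(t^{-c}e^{-A/t}),
\end{equation*}
the sum converging because the Gaussian beats the (at most exponential) growth of the number of $\gamma$ in a ball. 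This avoids the Duhamel comparison and the wave-operator analysis entirely; the finite-propagation-speed input is subsumed into the proof of \eqref{decad.esp}, which the paper already quotes from Donnelly. Both approaches are valid; yours is self-contained but redundant given \eqref{decad.esp}, and its Duhamel step should be made precise along the lines indicated above.
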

For the proof of the lemma see \cite{Lo}, or also \cite{AzT}, \cite{GR}.
With the same technique we deduce
\begin{proposition}\label{intloc}
The differential forms $\Str_\G \ds \left(\frac{d\mathbb{\tilde{B}}_t}{dt}e^{-\mathbb{\tilde{B}}_t^2}\right)$ and $\tr{}^\s_\G \ds\left( \frac{d\mathbb{\tilde{B}^\s}_t}{dt}e^{-(\mathbb{\tilde{B}}^\s_t)^2}\right)$ are integrable on $[0,1]$, uniformly on compact subsets.
\end{proposition}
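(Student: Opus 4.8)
The plan is to follow the strategy used for Proposition~\ref{teoloc}: reduce the $\G$-(super)traces over $\Mt$ to the corresponding classical transgression integrands of the compact fibre bundle $\pi\colon M\to B$, up to an error which is exponentially small as $t\to 0$ and hence irrelevant for integrability on $[0,1]$, and then invoke the classical estimates on $[0,1]$ for the compact family. Concretely, I would first rescale: from $\mathbb{B}_t=\nabla^{\mathcal H}+\sqrt t\,\mathcal D-\tfrac{c(T)}{4\sqrt t}$ one gets
\[
\frac{d\Bt_t}{dt}=\frac{1}{2\sqrt t}\,\tilde{\mathcal D}+\frac{1}{8\,t^{3/2}}\,c(\tilde T),
\]
so that the Schwartz kernel of $\tfrac{d\Bt_t}{dt}\,e^{-\Bt_t^2}$ is obtained from that of $e^{-\Bt_t^2}$ by applying, on the first variable, the first order vertical operator $\tilde{\mathcal D}$ with scalar weight $t^{-1/2}$ and the order-zero, $\Lambda^2T^*B$-valued endomorphism $c(\tilde T)$ with scalar weight $t^{-3/2}$. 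Using the derivative bounds in the fourth bullet of the definition of the heat kernel together with the Gaussian estimate \eqref{decad.esp}, this kernel obeys an off-diagonal bound $c\,t^{-n/2-3/2}e^{-c'd^2(\tilde x,\tilde y)/t}$ on compact subsets of $B$, with constants $\G$-uniform along the fibres by bounded geometry of the covering.

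Then I would run, \emph{on this product}, the argument of the Lemma of \cite{Lo2} quoted right after Proposition~\ref{teoloc}: combining finite propagation speed of the wave operator of $\Dt_z^2$ with \eqref{decad.esp}, and carrying out the finitely many Duhamel iterations produced by $\Bt_z^2=\Dt_z^2-\tilde C$ with $\tilde C\in\O^{\geq1}$, one compares the diagonal of $\tfrac{d\Bt_t}{dt}e^{-\Bt_t^2}$ on $\Mt_z$ with that of $\tfrac{d\mathbb{B}_t}{dt}e^{-\mathbb{B}_t^2}$ on $M_z$ and obtains
\[
\Bigl|[\tfrac{d\mathbb{B}_t}{dt}e^{-\mathbb{B}_t^2}](\pi(\tilde x),\pi(\tilde x))-[\tfrac{d\Bt_t}{dt}e^{-\Bt_t^2}](\tilde x,\tilde x)\Bigr|=\mathcal O(t^{-c}e^{-A/t}).
\]
Integrating over a fundamental domain for the $\G$-action on $\Mt_z$, which is carried diffeomorphically onto $Z_z$ by $p$, and taking (super)traces pointwise in $z\in B$, this yields
\[
\StrG\!\Bigl(\tfrac{d\Bt_t}{dt}e^{-\Bt_t^2}\Bigr)=\Str\!\Bigl(\tfrac{d\mathbb{B}_t}{dt}e^{-\mathbb{B}_t^2}\Bigr)+\mathcal O(t^{-c}e^{-A/t}),
\]
and, in the odd case, the analogous identity for $\tr{}^\s_\G\bigl(\tfrac{d\mathbb{B}^\s_t}{dt}e^{-(\mathbb{B}^\s_t)^2}\bigr)$ after using $\tr{}^\s_\G e^{-(\mathbb{B}^\s_t)^2}=\tr{}^{odd}_\G e^{-\mathbb{B}_t^2}$ and its $\tfrac{d}{dt}$-analogue to reduce to the even-degree part of the same estimate for the differential operator $\mathbb{B}_t$. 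The error $\mathcal O(t^{-c}e^{-A/t})$ is integrable on $[0,1]$ uniformly on compact $K\subset B$.

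It then only remains to recall that for the compact fibration $\pi\colon M\to B$ the transgression integrands $\Str(\tfrac{d\mathbb{B}_t}{dt}e^{-\mathbb{B}_t^2})$ and $\tr{}^\s(\tfrac{d\mathbb{B}^\s_t}{dt}e^{-(\mathbb{B}^\s_t)^2})$ extend to functions integrable on $[0,1]$, locally uniformly on $B$: this is classical and follows from Getzler's rescaling as in \cite[Ch.~9--10]{BGV} (see also \cite{BC}), which gives at worst a $t^{-1/2}$ singularity as $t\to 0$, with constants locally uniform on $B$ because the fibres are compact. Combining the two steps proves the Proposition.

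The delicate point, as for Proposition~\ref{teoloc}, is the first step: one must check that the comparison Lemma of \cite{Lo2}, stated there for the heat kernel alone, really survives (i)~multiplication by the unbounded scalars $t^{-1/2},t^{-3/2}$ and by the first order operator $\tilde{\mathcal D}$, and (ii)~the expansion of the superconnection curvature along $B$ by Duhamel, without spoiling either the Gaussian off-diagonal control or the exponential smallness of the difference under $p$. Once this is verified, what is left is the bookkeeping of uniformity over compact $K\subset B$ and along the $\G$-orbits.
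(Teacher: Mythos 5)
Your proof is correct, but it follows a genuinely different route from the paper's. The paper does not compare the covering transgression integrand directly to the compact one; instead it reproduces the argument of \cite[Ch.~10, p.~340]{BGV} \emph{on the covering itself}: it forms the enlarged geometric family $\breve{M}=\Mt\times\mathbb{R}^+\to B\times\mathbb{R}^+$ with superconnection $\breve{\mathbb{B}}=\tilde{\mathbb{B}}_s+d_{\mathbb{R}^+}-\frac{n}{4s}ds$, uses Duhamel to identify the $ds$-component of $\Str_\G(e^{-\breve{\mathbb{B}}_t^2})$ with (minus) the transgression integrand, invokes the $\G$-asymptotic expansion $\Str_\G(e^{-\breve{\mathbb{B}}_t^2})\sim\sum t^{j/2}(\Phi_{j/2}-\alpha_{j/2}\,ds)$ near $t=0$, and finally kills $\alpha_0$ by observing that the local index density $\int_{\breve{M}/\breve{B}}\hat A$ contains no $ds$ because the fibres of $\breve M\to\breve B$ are copies of those of $\Mt\to B$. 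Thus the Lott comparison lemma is only ever applied to a heat trace, and the integrability of the transgression integrand is extracted algebraically. Your route instead applies the Lott comparison directly to the kernel of $\frac{d\mathbb{\tilde B}_t}{dt}e^{-\mathbb{\tilde B}_t^2}$, and then quotes integrability of the compact transgression integrand. Both are valid and ultimately rest on the same finite-propagation-speed estimate; the paper's version keeps that estimate applied only to heat kernels and shifts the rest of the work onto the transparent BGV algebra, whereas yours is more hands-on and requires you to verify (as you correctly flag) that the Cheeger--Gromov--Taylor comparison survives composition with the unbounded scalars $t^{-1/2},t^{-3/2}$, the local first-order operator $\tilde{\mathcal D}$, and the Duhamel iterates coming from $\tilde{\mathbb{B}}_t^2=t\tilde D^2-\tilde C_t$. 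That verification is reasonable given the Gaussian estimate \eqref{decad.esp} and the derivative bounds in the definition of the heat kernel, but it is extra work that the paper's enlarged-family trick is designed to avoid. One small economy you could make: the paper's own Proposition~\ref{teoloc} (applied to $\breve M$) already packages the comparison-to-the-compact-case step, so once you've noticed the $\breve M$ device you no longer need to redo the finite-propagation argument at the level of the transgression integrand.
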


\begin{proof} The proof is as in \cite[Ch.10, pag. 340]{BGV}. We reason for example in the even case. Consider the rescaled superconnection $\mathbb{\tilde{B}}_s$ as a one-parameter family of superconnections, $s\in \mathbb{R}^+$, and construct the new family $\breve{M}=\Mt\times \mathbb{R}^+\rightarrow B\times \mathbb{R}^+=:\breve{B}$. On $\breve{E}=\tilde{E}\times \mathbb{R}^+$ there is a naturally induced family of Dirac operators whose Bismut superconnection is $\breve{\mathbb{B}}=\mathbb{\tilde{B}}_s+d_{\mathbb{R}^+}-\frac{n}{4s}ds$, and its rescaling is 
$\ds
\breve{\mathbb{B}}_t=\tilde{\mathbb{B}}_{st}+d_\mathbb{R^+}-\frac{n}{4s}ds
$.
Its curvature is  
$\ds
\breve{\mathbb{B}}^2_t=\tilde{\mathbb{B}}^2_{st}+t\frac{d\tilde{\mathbb{B}}_s}{ds} \wedge ds
$, so that 
$$
e^{-\breve{\mathbb{B}}_t^2}=e^{-\tilde{\mathbb{B}}^2_{st}}-\int_0^1 e^{-u\tilde{\mathbb{B}}^2_{st}}t\frac{d\tilde{\mathbb{B}}_s}{ds} e^{-(1-u)\mathbb{B}^2_{st}}\wedge ds=e^{-\tilde{\mathcal{F}}_{st}}-\frac{\partial\tilde{\mathbb{B}}_{st}}{\partial s} e^{-\tilde{\mathbb{B}}_{st}}\wedge ds.
$$Then
\begin{equation}
\label{strtr}
\Str{}_\G\left(e^{-\breve{\mathbb{B}}^2_t}\right)=\Str{}_\G(e^{-\tilde{\mathbb{B}^2_{st}}})-\Str{}_\G\left(\frac{\partial\tilde{\mathbb{B}}_{st}}{\partial s} e^{-\tilde{\mathbb{B}}_{st}}\right) ds
\end{equation}
At $t=0$ we have the asymptotic expansion 
$
\Str{}_\G(e^{-\breve{\mathbb{B}}_t})\sim \sum_{j=0}^\infty t^{\frac{j}{2}}(\Phi_{\frac{j}{2}}-\alpha_{\frac{j}{2}}ds)
$, without singular terms. 
Computing \eqref{strtr} in $s=1$, since $\displaystyle\frac{\partial\tilde{\mathbb{B}}_{st}}{\partial s}=t \frac{\partial\tilde{\mathbb{B}}_{s}}{\partial s}\,$, one has
$\;\displaystyle 
\Str{}_{\G}\left(t\frac{\partial\tilde{\mathbb{B}}_{s}}{\partial s} e^{-\mathcal{\tilde{F}}_t}\right)\sim \sum_{j=0}^\infty t^{\frac{j}{2}}\a_{\frac{j}{2}}
$, 
and therefore $
\Str{}_{\G}\left(\frac{\partial \tilde{\mathbb{B}}_{s}}{\partial s} e^{-\mathcal{\tilde{F}}_t}\right)\sim \sum_{j=0}^\infty t^{\frac{j}{2}-1}\a_{\frac{j}{2}}$.
Let's compute $\a_0$. From the local formula
\begin{equation}
\label{loct}\Phi_0-\a_0 ds=\lim_{t\rightarrow 0}\Str{}_{\G}\left(e^{-\breve{\mathcal{F}}_t}\right)=
\int_{\breve{M}/\breve{B}} \hat{A}(\breve{M}/\breve{B})
\end{equation}
since $\breve{M}_{(z,s)}=\Mt_z\times\{s\}$ and the differential forms are pulled back from those on $\tilde{M}\rightarrow B$, then the right hand side of \eqref{loct} does not contain $ds$ so that $\a_0=0$.
This implies that $
\Str{}_\G(\ds \frac{d\mathbb{\tilde{B}}_t}{dt}e^{-\mathbb{\tilde{B}}_t^2})\sim \sum_{j=1}^\infty t^{\frac{j}{2}-1}\a_{\frac{j}{2}}$.
\end{proof}

\section{The $L^2$-eta form}
\label{sec.wea}
We prove in Theorem \ref{th-eta} the well definiteness of the $L^2$-eta form $\hat{\eta}_{(2)}(\mathcal{\tilde{D}})$ under opportune regularity assumptions.
We make use of the techniques of \cite{HL}. 

\subsection{The family Novikov--Shubin invariants}\label{N--S}
The $t\rightarrow \infty$ asymptotic of the heat kernel is controlled by the behaviour of the spectrum near zero.
Let $\tilde{P}=(\tilde{P}^z)_{z\in B}$ the family of projections onto $\ker \mathcal{\tilde{D}}$ and let $\tilde{P}_\ep=\chi_{(0,\ep)}(\tilde{\mathcal{D}})$ be the family of spectral projections relative to the interval $(0,\ep)$; denote  $\tilde{Q}_\ep=1-\tilde{P}_\ep-\tilde{P}$.

For any $z\in B$ the operator $\tilde{D}_z$ is a $\Gamma$-invariant unbounded operator: let $\tilde{D}^2_z=\int \la dE^z(\la)$ be the spectral decomposition of $\tilde{D}_z^2$, and $N^z(\la)=\tr_\Gamma E^z(\la)$ its spectral density function \cite{GS}.
Denote $b^z=\tr_\G \tilde{P}^z$. Then $N^z(\ep)=b^z+\trG \tilde{P}^z_\ep$ and from \cite{ES} the behaviour of 
$\theta^z(t)=\trG(\exp(-t\tilde{D}_z))$ at $\infty$ is governed by
\begin{equation}
\a_z=\sup\{a: \theta^z(t)=b^z+\mathcal O(t^{-a})\}=\sup \{a: N^z(\ep)=b^z+\mathcal O(\ep^a)\}\end{equation}
where $\a_z$ is called the \emph{Novikov--Shubin invariant} of $\tilde{D}_z$.

We shall later impose conditions on $\a_z$ uniformly on compact subset of $B$, so we introduce the following definition from \cite{GR}: let $K\subset B$ be a compact, define $\a_K:=\inf_{z\in K}\a_z$. We call $\{\a_K\}_{K\subset B}$ the \emph{family Novikov--Shubin invariants} of the fibre bundle $\tilde{M}\rightarrow B$.

By results of Gromov and Shubin \cite{GS}, when $\tilde{D}^2_z$ is the Laplacian, $\a_z$ is a $\G$-homotopy invariant of $\tilde{M}_z$ \cite{GS}, in particular it does not depend on $z$. In that case $\a_z$ is locally constant on $B$. For a general Dirac type operator this is not true and we need to use the $\a_K$'s.

\begin{definition}\cite{HL} We say the family $\mathcal{\tilde{D}}$ has \emph{regular spectral projections} if $\tilde{P}$ and $\tilde{P}_\ep$ are smooth with respect to $z\in B$, for $\ep$ small, and $\nabla^{\tilde{\mathcal{H}}}\tilde{P}, \nabla^{\tilde{\mathcal{H}}}\tilde{P}_\ep$ are in $ \mathcal N$ and are bounded independently of $\ep$.
We say that the family \emph{$\mathcal{\tilde{D}}$ has regularity $A$}, if $\forall K\stackrel{cpt}{\subseteq}  B$ it holds $\a_K\geq A$.
\end{definition}
\begin{remark}To have regular projections is a strong condition, difficult to be verified in general. The family of signature operators verifies the smoothness of $\tilde{P}$ \cite[Theorem 2.2]{GR} but the smoothness of $\tilde{P}_\ep$ is not clear even in that case.
\end{remark}
The large time limit of the superconnection-Chern character $\Str_\G e^{-\tilde{\mathbb{B}}_t^2}$ is computed in \cite[Theorem 5]{HL}. Specializing to our $L^2$-setting it says the following. 
\begin{theorem}\label{thHL}
\cite{HL} Let  $\tilde{\nabla}_0=\tilde{P}\nabla^{\tilde{\mathcal H}}\tilde{P}$. If $\mathcal{\tilde{D}}$ has regular projections and regularity $>3\dim B$,
$$
\lim_{t\rightarrow \infty} \Str{}_\G(e^{-\mathbb{\tilde{B}}_t^2})=\Str{}_\G e^{-\tilde{\nabla_0}^2}.
$$  
\end{theorem}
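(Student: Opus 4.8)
The plan is to follow the strategy of Heitsch--Lazarov \cite{HL} closely, adapting it to the $L^2$-setting, where the noncompact fibres $\tilde Z$ are replaced by the noncompact covering fibres and the Haefliger trace is replaced by the $\Gamma$-trace $\Str_\G$. First I would decompose the rescaled Bismut superconnection heat operator using the spectral splitting $1=\tilde P+\tilde P_\ep+\tilde Q_\ep$ into ``small eigenvalue'' and ``gap'' parts. Concretely, write $\mathbb{\tilde B}_t=\nabla^{\tilde{\mathcal H}}+\sqrt t\,\tilde{\mathcal D}-c(\tilde T)/(4\sqrt t)$ and expand $e^{-\mathbb{\tilde B}_t^2}$ via the Duhamel/Volterra series \eqref{heat} around the leading term $e^{-t\tilde D^2}$. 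The key point is that the $\tilde Q_\ep$-part carries a uniform spectral gap of size $\ep^2$, so all contributions to $\Str_\G(e^{-\mathbb{\tilde B}_t^2})$ involving at least one factor of $\tilde Q_\ep$ decay like $\mathcal O(e^{-c\ep^2 t})$ uniformly on compact $K\subset B$; they contribute nothing in the limit $t\to\infty$. What remains is the part of the heat operator living in the range of $\tilde P+\tilde P_\ep$.

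Next I would show that the $\tilde P_\ep$-part also vanishes as $t\to\infty$, but here the argument is quantitative and uses the Novikov--Shubin hypothesis. The relevant estimate is that $\trG\tilde P^z_\ep=N^z(\ep)-b^z=\mathcal O(\ep^{a})$ for any $a<\a_z$, hence uniformly $\mathcal O(\ep^{a})$ for $a<\a_K$ on $K$. Estimating a term in the Volterra series with $k$ insertions of $\tilde C$ (which has form-degree $\ge 1$, so only $k\le\dim B$ terms survive), sandwiched between heat factors and projected onto the $(\tilde P+\tilde P_\ep)$-range, one gets a bound of the shape $C\,t^{k}\,\ep^{a(\text{number of }\tilde P_\ep\text{ slots})}$ against the eigenvalue mass near zero, while the ``$\tilde P$-only'' diagonal term converges. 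Optimizing $\ep=\ep(t)$, say $\ep\sim t^{-1/2+\mu}$ for suitable small $\mu>0$, one needs the exponent to beat the polynomial growth $t^{\dim B}$ coming from the form-degree-$\le\dim B$ truncation of the superconnection; this is exactly where the threshold $\a_K>3\dim B$ enters, the factor $3$ accounting for the three sources of powers of $t$ (the $t^k$ from the simplex integration, the $t^{-1}$ from $c(\tilde T)/(4\sqrt t)$ insertions, and the loss in converting the $N^z(\ep)$ estimate back through the heat integral, cf. the discussion around \eqref{decad.esp}). Uniformity on $K$ follows because regular spectral projections give $\nabla^{\tilde{\mathcal H}}\tilde P,\nabla^{\tilde{\mathcal H}}\tilde P_\ep\in\mathcal N$ bounded independently of $\ep$, so all the operator-norm estimates in the spaces $\n{\cdot}_{r,s}$ hold uniformly.

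Finally, on the surviving range of $\tilde P$, the rescaled superconnection restricts (after conjugation by $\de_t$) to something that converges to the finite-dimensional superconnection $\tilde\nabla_0=\tilde P\nabla^{\tilde{\mathcal H}}\tilde P$: the terms $\sqrt t\,\tilde{\mathcal D}$ kill the range of $\tilde P$ (since $\tilde P$ projects onto $\ker\tilde{\mathcal D}$), the $c(\tilde T)/(4\sqrt t)$ term vanishes as $t\to\infty$, and $\tilde P(\nabla^{\tilde{\mathcal H}})\tilde P$ survives; the off-diagonal pieces $\tilde P\nabla^{\tilde{\mathcal H}}(\tilde P_\ep+\tilde Q_\ep)$ contribute only through commutators which are again controlled by the gap/Novikov--Shubin estimates. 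Hence $\Str_\G(e^{-\mathbb{\tilde B}_t^2})\to\Str_\G(e^{-\tilde\nabla_0^2})$, with $\Str_\G$ here the $\Gamma$-supertrace of a family of finite-$\Gamma$-dimensional bundles (the kernel bundle $\ker\tilde{\mathcal D}$), so the right-hand side makes sense as a form on $B$. The main obstacle is the second step: carefully propagating the Novikov--Shubin decay through the full Volterra expansion while keeping all bounds uniform on compact subsets of $B$, which is where the precise numerical threshold $3\dim B$ must be extracted and where the regularity of the spectral projections is essential. Since all estimates are local over $B$ and use only the earlier kernel bounds \eqref{decad.esp} together with the spaces of operators from Appendix \ref{app1}, the argument is the verbatim $L^2$-specialization of \cite[Theorem 5]{HL}.
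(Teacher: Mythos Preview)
The paper does not give its own proof of this statement: Theorem~\ref{thHL} is quoted verbatim from \cite[Theorem~5]{HL} and simply specialized to the $L^2$-setting, so there is no in-paper argument to compare against. That said, the Heitsch--Lazarov technique is displayed in detail in the paper's proof of Theorem~\ref{th-eta}, and your sketch deviates from it in ways worth flagging.

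Your outline works directly with a Volterra expansion of $e^{-\tilde{\mathbb B}_t^2}$ around $e^{-t\tilde D^2}$ and then splits each term according to $1=\tilde P+\tilde P_\ep+\tilde Q_\ep$. The actual Heitsch--Lazarov route is different: one first splits the \emph{superconnection} as $\mathbb B_t=\mathbb B_{\ep,t}+A_{\ep,t}$, and then applies the diagonalization lemma \cite[Prop.~6]{HL} producing a gauge $g_\ep\in 1+\mathcal N_1$ that conjugates $\mathbb B_\ep^2$ into block-diagonal form $\nabla_0^2\oplus T_\ep^2\oplus(P_\ep\mathbb B P_\ep)^2$ modulo residuals in $\mathcal N_3\oplus\mathcal N_2\oplus 0$. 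It is Lemma~\ref{est-rest} that converts these filtration degrees into powers of $t$ once one sets $\ep=t^{-1/a}$ (not $\ep\sim t^{-1/2+\mu}$ as you propose). The exponential decay you invoke for the $\tilde Q_\ep$-part is replaced by the estimate $\Str_\G e^{-T_{\ep,t}^2}=\mathcal O(t^{-\delta})$ from \cite[Prop.~11]{HL}.

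Your account of where the threshold $3\dim B$ comes from is not correct. The factor $3$ is not an accumulation of ``three sources of powers of $t$''; it arises because the residual at the $(1,1)$ block in the diagonalization lies in $\mathcal N_3$, forcing $-\tfrac12+\tfrac3a<0$, i.e.\ $a>6$. Combining $a>6$ with the Novikov--Shubin bound $\tr_\G\tilde P_\ep=\mathcal O(\ep^\beta)=\mathcal O(t^{-\beta/a})$ and the polynomial growth $t^{q/2}$ (with $q=\dim B$) coming from the form-degree yields $\beta>3q$. Your heuristic leads to the right inequality but for the wrong reason, and would not survive a detailed write-up.
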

\subsection{The $L^2$-eta form} 
We now use the same techniques of \cite{HL} to analyse the transgression term in \eqref{transg} and define the secondary invariant $L^2$ eta form.
We prove
\begin{theorem}\label{th-eta}
If $\mathcal{\tilde{D}}$ has regular spectral projections and regularity $>3(\dim B+1)$, then $ \Str_\G \ds \left(\frac{d\mathbb{\tilde{B}}_t}{dt}e^{-\mathbb{\tilde{B}}_t^2}\right)=\mathcal O(t^{-\de-1})$, for $\de>0$. The same holds for $\tr^{even}_\G \ds \left(\frac{d\mathbb{\tilde{B}}_t}{dt}e^{-\mathbb{\tilde{B}}_t^2}\right) $.
\end{theorem}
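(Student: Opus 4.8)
The plan is to deduce the estimate from the Heitsch--Lazarov large time limit, Theorem \ref{thHL}, applied not to $\tilde{\mathcal D}$ over $B$ but to the auxiliary family over $\breve B=B\times\mathbb{R}^{+}$ already used in the proof of Proposition \ref{intloc}. Recall from there that $\breve{\mathbb{B}}_{t}=\tilde{\mathbb{B}}_{st}+d_{\mathbb{R}^{+}}-\frac{n}{4s}\,ds$ has curvature $\breve{\mathbb{B}}_{t}^{2}=\tilde{\mathbb{B}}_{st}^{2}+t\,\frac{d\tilde{\mathbb{B}}_{s}}{ds}\wedge ds$, so that \eqref{strtr} evaluated at $s=1$ reads
\[
\Str_{\G}\!\bigl(e^{-\breve{\mathbb{B}}_{t}^{2}}\bigr)\big|_{s=1}=\Str_{\G}\!\bigl(e^{-\tilde{\mathbb{B}}_{t}^{2}}\bigr)-t\,\Str_{\G}\!\Bigl(\frac{d\tilde{\mathbb{B}}_{t}}{dt}\,e^{-\tilde{\mathbb{B}}_{t}^{2}}\Bigr)\,ds .
\]
Thus $t\,\Str_{\G}\bigl(\frac{d\tilde{\mathbb{B}}_{t}}{dt}e^{-\tilde{\mathbb{B}}_{t}^{2}}\bigr)$ is, up to sign, exactly the $ds$-component of $\Str_{\G}(e^{-\breve{\mathbb{B}}_{t}^{2}})|_{s=1}$, and it is enough to show that this $ds$-component is $\mathcal{O}(t^{-\de})$ for some $\de>0$, uniformly on compact subsets.

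First I would check that $\breve{\mathcal D}$ satisfies the hypotheses of Theorem \ref{thHL} over $\breve B$, noting that $\dim\breve B=\dim B+1$. The fibre of $\breve M\to\breve B$ over $(z,s)$ is $\tilde M_{z}\times\{s\}$, with fibrewise operator the rescaling $\sqrt s\,\tilde D_{z}$; hence $\ker\breve D_{(z,s)}=\ker\tilde D_{z}$, the spectral projections $\breve P$ and $\breve P_{\ep}$ are the pull-backs of $\tilde P$ and $\tilde P_{\ep/\sqrt s}$, and on any compact subset of $\breve B$ (on which $s$ remains in a compact subset of $\mathbb{R}^{+}$, so $\ep/\sqrt s$ and $\ep$ are comparable) the smoothness in $(z,s)$ and the uniform bounds on $\nabla^{\breve{\mathcal{H}}}\breve P,\nabla^{\breve{\mathcal{H}}}\breve P_{\ep}$ are inherited from the corresponding properties of $\tilde{\mathcal D}$: so $\breve{\mathcal D}$ has regular spectral projections. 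The rescaling by $\sqrt s$ only rescales the spectral parameter, hence leaves the spectral density exponent at $0$ unchanged, so $\a_{(z,s)}(\breve D)=\a_{z}(\tilde D)$; consequently, for a compact $\breve K\subset\breve B$ with image $K=\mathrm{pr}_{B}(\breve K)$ one has $\a_{\breve K}=\a_{K}\ge A>3(\dim B+1)=3\dim\breve B$, i.e.\ $\breve{\mathcal D}$ has regularity $>3\dim\breve B$. This is precisely where the shift from $3\dim B$ in Theorem \ref{thHL} to $3(\dim B+1)$ in the hypothesis is consumed.

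Then I would invoke the proof of Theorem \ref{thHL} for $\breve{\mathcal D}$ in its quantitative form: splitting $e^{-\tilde{\mathbb{B}}_{t}^{2}}$ along $\breve P+\breve P_{\ep}+\breve Q_{\ep}$ and choosing $\ep=t^{-\b}$ with $\b\in(0,\tfrac12)$, the $\breve Q_{\ep}$-contributions are $\mathcal{O}(e^{-c\,t^{1-2\b}})$ by the spectral gap, while the $\breve P_{\ep}$-contributions are controlled, through the Novikov--Shubin bound $N(\ep)-b=\mathcal{O}(\ep^{\a})$ and the bookkeeping of the powers of $t$ produced by the rescaling of form degrees over a base of dimension $\dim\breve B$, by a negative power $\mathcal{O}(t^{-\de})$ of $t$ exactly when $\a>3\dim\breve B$. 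Hence, uniformly on compacts of $\breve B$,
\[
\Str_{\G}\!\bigl(e^{-\breve{\mathbb{B}}_{t}^{2}}\bigr)=\Str_{\G}\,e^{-\breve{\nabla}_{0}^{2}}+\mathcal{O}(t^{-\de}),\qquad\breve{\nabla}_{0}=\breve P\,\nabla^{\breve{\mathcal{H}}}\breve P .
\]
Since the finite-rank bundle $\operatorname{im}\breve P$, equipped with the connection $\breve{\nabla}_{0}$, is the pull-back of $(\operatorname{im}\tilde P,\tilde{\nabla}_{0})$ under $\mathrm{pr}_{B}\colon\breve B\to B$, the limit form $\Str_{\G}\,e^{-\breve{\nabla}_{0}^{2}}$ is pulled back from $B$ and carries no $ds$ --- the same observation used at the end of the proof of Proposition \ref{intloc}. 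Therefore the $ds$-component of the left-hand side above is $\mathcal{O}(t^{-\de})$, and combining with the first displayed identity gives $t\,\Str_{\G}\bigl(\frac{d\tilde{\mathbb{B}}_{t}}{dt}e^{-\tilde{\mathbb{B}}_{t}^{2}}\bigr)=\mathcal{O}(t^{-\de})$, that is $\Str_{\G}\bigl(\frac{d\tilde{\mathbb{B}}_{t}}{dt}e^{-\tilde{\mathbb{B}}_{t}^{2}}\bigr)=\mathcal{O}(t^{-1-\de})$. The odd case is identical: one works with the differential operator $\tilde{\mathbb{B}}_{t}$ and the $\Cli$-superconnection on $\breve M$, uses \eqref{transgs}, and replaces $\Str_{\G}$ by $\tr{}^{odd}_{\G}$, respectively $\tr{}^{even}_{\G}$ for the transgression term; the spectral splitting and the estimates go through without change.

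The main obstacle is the quantitative step: Theorem \ref{thHL} is stated as a bare limit, so one has to re-run the estimates of \cite{HL} while keeping explicit track of the powers of $t$ and $\ep$, and verify that the threshold ``$3\times$(base dimension)'' is exactly what permits a single choice of $\b<\tfrac12$ for which both the $\breve P_{\ep}$-term decays as a positive power of $t^{-1}$ and the $\breve Q_{\ep}$-term stays super-polynomially small; the commutator terms of the type $\breve P_{\ep}\nabla^{\breve{\mathcal{H}}}\breve Q_{\ep}$ generated when pushing the projections through the Duhamel expansion are where the factor $3$, rather than $1$ or $2$, enters. A secondary point is uniformity on compact subsets of $B$: this is built into the definitions of regularity, of $\a_{K}$, and into the uniform heat-kernel bound \eqref{decad.esp}, and it passes to $\breve B$ without difficulty.
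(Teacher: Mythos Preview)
Your strategy is genuinely different from the paper's and conceptually attractive: you promote the transgression integrand to the $ds$-component of the superconnection Chern character of the auxiliary family $\breve{\mathcal D}$ over $\breve B=B\times\mathbb{R}^{+}$ (the same device that handles $t\to 0$ in Proposition~\ref{intloc}) and then appeal to Heitsch--Lazarov over the enlarged base. This explains very cleanly why the threshold moves from $3\dim B$ to $3(\dim B+1)=3\dim\breve B$. The paper instead works directly over $B$: it writes $\mathbb{B}_t=\mathbb{B}_{\ep,t}+A_{\ep,t}$ along the spectral decomposition $\tilde P\oplus\tilde Q_\ep\oplus\tilde P_\ep$, expands $\Str{}_\G\bigl(\frac{d\mathbb{B}_t}{dt}e^{-\mathbb{B}_t^2}\bigr)$ into a term carrying $e^{-\mathbb{B}_{\ep,t}^2}$ plus a Duhamel remainder, and estimates each by hand, absorbing the extra factor $\frac{d\mathbb{B}_t}{dt}\sim\frac{1}{2\sqrt t}D$ into an operator $U$ with $\tr{}_\G(P_\ep UP_\ep)\le\|U\|\,\tr{}_\G P_\ep=\mathcal O(t^{q/2}\cdot t^{-\b/a})$ for $\ep=t^{-1/a}$; the choice $6<a<\frac{2\b}{q+1}$ then forces $\b>3(q+1)$.

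There is, however, a real gap in your claim that $\breve{\mathcal D}$ inherits regular spectral projections. You have $\breve P_\ep(z,s)=\tilde P_{\ep/\sqrt s}(z)$, and regularity over $\breve B$ demands that $\nabla^{\breve{\mathcal H}}\breve P_\ep\in\mathcal N$ be bounded independently of $\ep$. But $\nabla^{\breve{\mathcal H}}$ contains $ds\,\partial_s$, and $\partial_s\breve P_\ep=-\frac{\ep}{2s^{3/2}}\bigl(\partial_{\ep'}\tilde P_{\ep'}\bigr)\big|_{\ep'=\ep/\sqrt s}$, so you would need $\ep\,\partial_\ep\tilde P_\ep$ bounded in $\mathcal N$ --- equivalently, differentiability of $\ep\mapsto\tilde P_\ep$ in the spectral parameter. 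The paper's hypothesis only asks for smoothness of $\tilde P_\ep$ in $z\in B$ and a bound on $\nabla^{\tilde{\mathcal H}}\tilde P_\ep$ independent of $\ep$; it says nothing about $\ep$-derivatives. This is not cosmetic: the diagonalization lemma and Lemma~\ref{est-rest} over $\breve B$ consume $\nabla^{\breve{\mathcal H}}\breve P_\ep$ (through $\breve Q_\ep=1-\breve P-\breve P_\ep$), so the missing regularity is exactly what drives the estimates you want to quote. The paper's direct argument avoids this because it never leaves $B$ and never differentiates the spectral projections in $\ep$. Your approach could be salvaged under the extra assumption that $\ep\mapsto\tilde P_\ep$ is $C^1$ with $\ep\,\partial_\ep\tilde P_\ep$ bounded in $\mathcal N$; without it, the reduction to Theorem~\ref{thHL} on $\breve B$ does not go through, and unpacking the HL proof on $\breve B$ while isolating the $ds$-component at $s=1$ essentially reconstructs the paper's direct computation.
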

We start with some remarks and lemmas. In particular we shall repeatedly use the following.
\begin{remark}
Let $T\in \mathcal{N} $. From lemma \ref{appker}, $\forall z\in B\;$ its Schwartz kernel $[T_z]$ satisfies that for sufficiently large $l$, $\exists\, c_l^z$ such that
$
\forall x,y\in \Mt_z \;\;\; \left|\;[T_z](x,y)\;\right|\leq c_l^z \n{T_z}_{-l,l}
$
Therefore an estimate of $ \n{T_z}_{-l,l}
$ produces directly via an estimate of $\Tr_\G T_z$.
\end{remark}
\paragraph{\textit{Notation.}}Since in this section we are dealing only with the family of operators on the covering, to simplify the notations let's call $\mathcal{\tilde{D}}=D$, removing all tildes.
Pose
$$
\mathbb{B}_\ep:=(P+Q_\ep)\mathbb{B}(P+Q_\ep)+P_\ep\mathbb{B}P_\ep
$$
$$
A_{\ep}=\mathbb{B}-\mathbb{B}_\ep
$$
and write the rescaled operators as\begin{equation}
\label{aet}
\mathbb{B}_{\ep,t}=(P+Q_\ep)(\mathbb{B}_t-\sqrt{t}D)(P+Q_\ep)+\sqrt{t}D+P_\ep(\mathbb{B}_t-\sqrt{t}D)P_\ep
\end{equation}
$$
A_{\ep,t}=(P+Q_\ep)(\mathbb{B}_t-\sqrt{t}D)P_\ep+P_\ep(\mathbb{B}_t-\sqrt{t}D)(P+Q_\ep)
$$
Denote also $T_\ep=Q_\ep \mathbb{B}Q_\ep$ and $T_{\ep,t}=Q_\ep \mathbb{B}_t Q_\ep$ as in \cite{HL}.

We will need the following two lemmas from \cite{HL}. The first is the \lq\lq diagonalization" of $\mathbb{B}_\ep^2$ with respect to the spectral splitting of $\mathcal H$. 
\begin{lemma}\cite[Prop.6]{HL} 
Let $\mathcal{M}$ be the space of all maps $f\colon B\rightarrow \Lambda TB\otimes \End \tilde{\mathcal{H}}$.
There exists a measurable section $g_\ep\in \mathcal{M}$, with $g_\ep\in 1+\mathcal{N}_1$ such that
$$g_\ep \mathbb{B}^2_\ep g_\ep^{-1}=\left|\begin{array}{ccc}\nabla_0^2 & 0 & 0 \\0 & T_\ep^2 & 0 \\0 & 0 & (P_\ep\mathbb{B}P_\ep)^2
\end{array}\right|\;\;\;\;\text{mod }\;\;\;\;\left|\begin{array}{ccc}\mathcal{N}_3 & 0 & 0 \\0 & \mathcal{N}_2 & 0 \\0 & 0 & 0\end{array}\right|.$$
\end{lemma}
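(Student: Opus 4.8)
The goal is to diagonalize the curvature $\mathbb{B}_\ep^2$ with respect to the orthogonal splitting $\mathcal H = P\mathcal H \oplus Q_\ep\mathcal H \oplus P_\ep\mathcal H$ of the infinite-dimensional bundle determined by the (smooth, by the regularity assumption) spectral projections. The starting observation is that by construction of $\mathbb{B}_\ep$ the off-diagonal blocks of $\mathbb{B}_\ep$ already vanish to first order: $\mathbb{B}_\ep$ preserves each of the three subspaces modulo a term in $\mathcal N_1$. Squaring, one finds that $\mathbb{B}_\ep^2$ is block-diagonal modulo $\mathcal N_2$, with diagonal blocks $(P\mathbb{B}P)^2$, $T_\ep^2 = (Q_\ep\mathbb{B}Q_\ep)^2$ and $(P_\ep\mathbb{B}P_\ep)^2$ respectively, each computed up to lower-filtration error. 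So the first step is to write $\mathbb{B}_\ep^2 = \Delta + R$ where $\Delta$ is the block-diagonal operator on the right-hand side (with the $PP$-block being $\nabla_0^2$; note $P\mathbb{B}P = P\nabla^{\tilde{\mathcal H}}P = \nabla_0$ since $PDP = 0$ and $Pc(T)P$ has positive form-degree and can be absorbed into the error) and $R$ lies in the appropriate $\mathcal N_{\geq 1}$; then bootstrap the filtration degree of the error by conjugation.

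\textbf{Key steps.} First I would record the block structure of $\mathbb{B}_\ep$: writing $\Pi_0 = P$, $\Pi_1 = Q_\ep$, $\Pi_2 = P_\ep$, one has $\mathbb{B}_\ep = \sum_i \Pi_i \mathbb{B}\Pi_i$ by definition, and since $\nabla^{\tilde{\mathcal H}}\Pi_i \in \mathcal N$ (regular projections) while $D$ and $c(T)$ commute with or interact controllably with the $\Pi_i$, the commutators $[\Pi_i,\mathbb{B}]$ that measure the deviation from block-diagonality have positive form-degree, i.e. lie in $\mathcal N_1$. Second, square: $\mathbb{B}_\ep^2 = \sum_i (\Pi_i\mathbb{B}\Pi_i)^2 + (\text{cross terms})$, and the cross terms, being products involving at least two factors each contributing a form-degree-$\geq 1$ commutator, land in $\mathcal N_2$ (for the $PP$-corner one gains an extra degree because $\nabla_0$ itself is the leading term, pushing the error to $\mathcal N_3$ there). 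Third, identify the diagonal blocks: $(\Pi_0\mathbb{B}\Pi_0)^2 = \nabla_0^2$ modulo $\mathcal N_3$, $(\Pi_1\mathbb{B}\Pi_1)^2 = T_\ep^2$ modulo $\mathcal N_2$, and $(\Pi_2\mathbb{B}\Pi_2)^2 = (P_\ep\mathbb{B}P_\ep)^2$ exactly. Fourth, upgrade to the claimed statement by a Moser-type iteration: if $\mathbb{B}_\ep^2 = \Delta + R$ with $R \in \mathcal N_1$, seek $g_\ep = 1 + X$, $X \in \mathcal N_1$, solving $g_\ep \mathbb{B}_\ep^2 g_\ep^{-1} \equiv \Delta$ order by order in the filtration; at each stage one must solve $[\Delta_0, X_k] = (\text{known lower-order term})$ where $\Delta_0$ is the scalar-form-degree part of $\Delta$, and since $\Delta_0$ acts diagonally the off-diagonal components of $X_k$ are determined, while the obstruction to killing the diagonal part of $R$ is exactly what produces the allowed residual errors $\mathcal N_3, \mathcal N_2, 0$ in the three corners. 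Because $\Lambda T^*B$ is finite-dimensional, the filtration $\mathcal N = \mathcal N_0 \supseteq \mathcal N_1 \supseteq \cdots$ terminates at $\mathcal N_{\dim B + 1} = 0$, so the iteration is finite and produces $g_\ep \in 1 + \mathcal N_1$ as required; measurability (rather than smoothness) of $g_\ep$ in $z \in B$ is all one needs and follows since each $X_k$ is built algebraically from the $\Pi_i$ and $\mathbb{B}$, which are at worst measurable in $z$ for fixed small $\ep$.

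\textbf{Main obstacle.} The delicate point is the bookkeeping of \emph{which} corner absorbs \emph{which} residual filtration degree, and in particular why the $PP$-corner error is $\mathcal N_3$ rather than merely $\mathcal N_2$ while the $P_\ep P_\ep$-corner has \emph{no} error at all. For the $P_\ep\mathbb{B}P_\ep$ block the point is that $\mathbb{B}_\ep$ restricted to $P_\ep\mathcal H$ is, by the very definition of $\mathbb{B}_\ep$, equal to $P_\ep\mathbb{B}P_\ep$ with nothing subtracted, so no diagonalization is needed there — one must be careful that the conjugation by $g_\ep$ does not reintroduce errors into this corner, which forces $X$ to have vanishing components mapping into or out of $P_\ep\mathcal H$ beyond what is dictated, and one checks this is consistent. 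For the $\nabla_0^2$ corner, the gain of one extra degree comes from the fact that on $\ker D$ the operator $\sqrt t D$ is absent, so the leading ($\mathcal N_0$) part of $\mathbb{B}_\ep|_{P\mathcal H}$ is the honest connection $\nabla_0$, a genuine superconnection of total degree one with curvature living in even form-degrees; the cross-contamination from the other blocks that could pollute this corner is suppressed by an extra commutator with $P$, landing it in $\mathcal N_3$. Getting these three accountings simultaneously correct — and verifying that the iteration's obstruction terms are compatible with them — is the real content; the rest is the standard formal structure of diagonalizing a superconnection curvature with respect to an invariant splitting, exactly as in \cite[Prop.~6]{HL}, to which one defers for the detailed estimates on the $\mathcal N_{r,s}$-norms.
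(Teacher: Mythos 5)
The paper does not give a proof of this lemma; it is imported verbatim from Heitsch--Lazarov \cite[Prop.6]{HL}, so there is no in-paper proof to compare against. The only substantive remark the paper adds, immediately after the statement, is that $g_\ep = \hat g_\ep\oplus 1$ with $\hat g_\ep$ acting on $(P\oplus Q_\ep)\tilde{\mathcal H}$, i.e.\ the conjugation is nontrivial only on the coarse block $(P\oplus Q_\ep)\tilde{\mathcal H}$ and is the identity on $P_\ep\tilde{\mathcal H}$.

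On the substance of your outline: the general machinery (finite-filtration conjugation, terminating because $\Lambda^{>\dim B}T^*B=0$) is the right idea, but your entry point is stated incorrectly, and the error is not cosmetic. You write ``$\mathbb{B}_\ep = \sum_i\Pi_i\mathbb{B}\Pi_i$ by definition.'' That is not the definition in the paper: one has $\mathbb{B}_\ep = (P+Q_\ep)\mathbb{B}(P+Q_\ep) + P_\ep\mathbb{B}P_\ep$, so that $\mathbb{B}_\ep - \sum_i\Pi_i\mathbb{B}\Pi_i = P\mathbb{B}Q_\ep + Q_\ep\mathbb{B}P$. This difference is nonzero; it lies in $\mathcal N_1$ because the $\mathcal N_0$ parts $PDQ_\ep,\,Q_\ep DP$ vanish ($D$ commutes with its own spectral projections). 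The whole content of the lemma is to conjugate away exactly this $\mathcal N_1$ mixing between $P$ and $Q_\ep$: $\mathbb{B}_\ep$ is block-diagonal with respect to the coarse splitting $(P\oplus Q_\ep)\tilde{\mathcal H}\oplus P_\ep\tilde{\mathcal H}$, but not with respect to the fine one, which is precisely why a nontrivial $g_\ep$ exists at all and why it has the form $\hat g_\ep\oplus 1$. If your ``by definition'' claim were correct, $\mathbb{B}_\ep^2 = \sum_i(\Pi_i\mathbb{B}\Pi_i)^2$ would already be fine-block-diagonal with no cross terms (orthogonality of the $\Pi_i$ kills them), one could take $g_\ep = 1$, and the lemma would be vacuous; you then contradict yourself in the very next step by inserting ``$+$ (cross terms)'' into $\mathbb{B}_\ep^2$. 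Concretely, before conjugation $\mathbb{B}_\ep^2|_{PP} = (P\mathbb{B}P)^2 + P\mathbb{B}Q_\ep\mathbb{B}P$, and it is the second summand --- which lives in $\mathcal N_2$ but not in $\mathcal N_3$ --- that the conjugation must cancel to produce the stated residual $\mathcal N_3$ in the $PP$-corner; your explanation of the ``extra degree gained'' there attributes it to $Pc(T)P$ and $P\mathbb B P$ alone and never isolates this cross-contamination term, which is the one the iteration has to kill. The remaining structure of the argument (finite iteration, $P_\ep$-corner untouched, measurability of $g_\ep$) is sound once the block decomposition of $\mathbb{B}_\ep$ is set up correctly.
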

The diagonalization procedure acts on $(P\oplus Q_\ep)\mathcal H$, in fact $g_\ep$ has the form $g_\ep=\hat{g}_\ep\oplus 1$, with $\hat{g}_\ep$ acting on $(P\oplus Q_\ep)\mathcal H$.
From this lemma we get $\mathbb{B}_{\ep,t}^2=t\de_t\mathbb{B}_\ep^2\de_t^{-1}=$
\begin{multline*}=t\de_t g_\ep^{-1}\left(\left|\begin{array}{ccc}\nabla_0^2 & 0 & 0 \\0 & T^2_\ep & 0 \\0 & 0 & (P_\ep\mathbb{B}_tP_\ep)^2\end{array}\right| + \left|\begin{array}{ccc}\mathcal{N}_3 & 0 & 0 \\0 & \mathcal{N}_2 & 0 \\0 & 0 & 0\end{array}\right|\right)g_\ep \de_t=\\
=\de_t g_\ep^{-1}\de_t^{-1}\left|\begin{array}{ccc}t\de_t(\nabla_0^2+\mathcal{N}_3)\de_t^{-1} & 0 & 0 \\0 & t\de_t(T^2_\ep+\mathcal{N}_2)\de_t^{-1} & 0 \\0 & 0 & P_\ep\mathbb{B}_tP_\ep
\end{array}\right|\de_t g_\ep\de_t^{-1}.
\end{multline*}
The next lemma gives an estimate of the terms which are modded out.
\begin{lemma}\label{est-rest}\cite[lemma 9]{HL}
If $A\in \mathcal{N}_k$ is a residual term in the diagonalization lemma or is a term in $g_\ep-1$ or $g_\ep^{-1}-1$, then, posing $\ep=t^{-\frac{1}{a}}$, $A_t:=\de_t A\de_t^{-1}$ verifies: $\forall r,s$
$$
\n{A_t}_{r,s}=\mathcal{O}(t^{-\frac{k}{2}+\frac{k}{a}})\;\;\;\text{as }\; t\rightarrow \infty.
$$
\end{lemma}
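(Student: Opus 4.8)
The statement is \cite[Lemma~9]{HL}, and the argument transfers to the present $L^2$-setting once the Haefliger/holonomy-groupoid calculus of \cite{HL} is replaced by the Sobolev calculus for $\G$-invariant smoothing operators of Appendix~\ref{app1}. The first move is to pass to homogeneous components: write $A=\sum_{j\ge k}A^{(j)}$ with $A^{(j)}\in\mathcal{C}^\infty(B,\Lambda^jT^*B\otimes\Op_\G^{-\infty}(\Et))$, a finite sum since $k\le j\le\dim B$. As $\de_t$ acts on $\Omega^i(B,\mathcal H)$ by the scalar $t^{-i/2}$, we have $\de_tA^{(j)}\de_t^{-1}=t^{-j/2}A^{(j)}$, so
$$\n{A_t}_{r,s}\le\sum_{j\ge k}t^{-j/2}\,\n{A^{(j)}_\ep}_{r,s},\qquad\ep=t^{-1/a}.$$
Everything then reduces to the estimate $\n{A^{(j)}_\ep}_{r,s}=\mathcal O(\ep^{-j})$ as $\ep\to0$, uniform on every compact $K\subseteq B$, for the homogeneous components of a residual term of the diagonalization lemma \cite[Prop.~6]{HL} or of $g_\ep^{\pm1}-1$; granting it, $t^{-j/2}\n{A^{(j)}_\ep}_{r,s}=\mathcal O(t^{-j(1/2-1/a)})$, and since $1/2-1/a>0$ whenever the lemma is invoked, the finite sum is dominated by its $j=k$ term, which is $\mathcal O(t^{-k/2+k/a})$.

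For the reduced estimate I would run the induction on the filtration degree that is built into the construction of $g_\ep$, keeping track of the $\ep$-dependence at each step. The diagonalization is performed order by order: at the $j$-th step one removes the degree-$j$ off-diagonal block of $\mathbb{B}_\ep^2$ coupling $P\mathcal H$ and $Q_\ep\mathcal H$ by conjugating with $1+X^{(j)}$, where $X^{(j)}$ solves a homological equation $X^{(j)}(Q_\ep D Q_\ep)^2=-R^{(j)}$ (and its transpose), and the residual terms of degree $\ge j+1$ so produced are noncommutative polynomials in the previous $X^{(j')}$, in $P$, $P_\ep$, $D$, $\nabla^{\mathcal H}$, $c(T)$, and in the resolvents $(Q_\ep D Q_\ep)^{-1}$ coming from the homological equations. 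Two facts then carry the bookkeeping. \emph{(i) Uniform smoothing bounds:} $P$, $P_\ep$ lie in $\mathcal N$, $\nabla^{\mathcal H}P$, $\nabla^{\mathcal H}P_\ep$ lie in $\mathcal N$ bounded independently of $\ep$, and $\n{P}_{r,s}$, $\n{P_\ep}_{r,s}$ are $\ep$-uniformly bounded on $K$ for all $r,s$ — this is precisely the ``regular spectral projections'' hypothesis (together with $\n{P_\ep}_{r,s}\le C$, which follows from $|D|\le\ep$ on the range of $P_\ep$) — while $\mathbb{B}=D+\nabla^{\mathcal H}-c(T)/4$, $\nabla^{\mathcal H}$, $D$, $c(T)$ have $\G$-invariant coefficients bounded on $K$; here one uses Lemma~\ref{appker} and the Donnelly-type estimate \eqref{decad.esp} through the calculus of Appendix~\ref{app1}. \emph{(ii) The protective $D$-factor:} since $P$ projects onto $\ker D$, we have $PDP=PDQ_\ep=0$, so every off-diagonal coupling between $P\mathcal H$ and $Q_\ep\mathcal H$ carries a factor $Q_\ep D$; in particular the lowest-degree off-diagonal term of $\mathbb{B}_\ep^2$ is $(P\nabla^{\mathcal H}Q_\ep)(Q_\ep D Q_\ep)$ in degree $1$, where $P\nabla^{\mathcal H}Q_\ep$ is, up to sign, $P$ composed with the base-covariant derivatives of $P$ and $P_\ep$, hence a smoothing, $\ep$-uniformly bounded operator by (i). When such a term is fed into the homological equation, one of the two factors $(Q_\ep D Q_\ep)^{-1}$ is absorbed against the $Q_\ep D$, leaving a single $(Q_\ep D Q_\ep)^{-1}$ of operator norm $\le\ep^{-1}$ composed with an $\ep$-uniform smoothing operator; thus $\n{X^{(1)}}_{r,s}=\mathcal O(\ep^{-1})$, and raising the filtration degree by one costs at most one further power of $\ep^{-1}$. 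Induction, with the degree-$1$ part of $g_\ep-1$ as base case, gives $\n{A^{(j)}_\ep}_{r,s}=\mathcal O(\ep^{-j})$.

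The genuine difficulty is fact (ii): one must verify that the balance ``one degree up $\leftrightarrow$ at most one power of $\ep^{-1}$'' survives the nested commutators produced along the whole iteration, i.e.\ that a residual term of filtration degree $j$ never accumulates a factor $\ep^{-p}$ with $p>j$. This uses the precise structure $\mathbb{B}=D+\nabla^{\mathcal H}-c(T)/4$ essentially — the off-diagonal couplings keep reproducing a $Q_\ep D$, which the next resolvent division converts back into an $\ep$-uniform smoothing operator times a single $(Q_\ep D Q_\ep)^{-1}$ — and it is the combinatorial heart of \cite[Lemma~9]{HL}. The remaining points — uniformity in $z\in K$ of the smoothing bounds on $P_\ep$, and the behaviour of the norms $\n{\cdot}_{r,s}$ under composition with the genuinely unbounded $(Q_\ep D Q_\ep)^{-1}$ — are routine within the calculus of Appendix~\ref{app1}, handled exactly as the corresponding holonomy-groupoid estimates in \cite{HL}.
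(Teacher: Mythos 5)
The paper does not prove this lemma: it is imported directly from \cite[Lemma~9]{HL} with no argument in the text, so there is nothing in the source for your attempt to be measured against. The cited reference carries the entire burden, and the text proceeds immediately to apply the estimate.

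What you have written is a sketch of how the Heitsch--Lazarov proof should transfer to the $L^2$-setting. Your reduction to homogeneous components is correct, the identity $\de_tA^{(j)}\de_t^{-1}=t^{-j/2}A^{(j)}$ is right, and the mechanism you isolate --- every off-diagonal $P\leftrightarrow Q_\ep$ coupling inside $\mathbb B_\ep^2$ carries a factor $Q_\ep D$ because $PDP=PDQ_\ep=0$, so each homological equation in the order-by-order decoupling costs one, not two, powers of the resolvent $(Q_\ep DQ_\ep)^{-1}\sim\ep^{-1}$ --- is indeed why the bound scales as $\ep^{-k}$ rather than $\ep^{-2k}$. But you then explicitly hand the inductive bookkeeping (that a degree-$j$ residual never accumulates more than $j$ resolvents through the nested conjugations) back to \cite{HL}, which is circular if the goal is to supply a proof of the lemma rather than a reading guide to it. What you have is an informative gloss on the citation, not a self-contained argument; and that is precisely the same level of detail as the paper itself, which simply invokes HL at this point.
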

The lemma implies that at place (1,1) in the diagonalized matrix above we get $\nabla_0^2+\mathcal{O}(t^{-\frac{3}{2}+\frac{3}{a}+1})=\mathcal{O}(t^{-\frac{1}{2}+\frac{3}{a}})$. 
To have $-\frac{1}{2}+\frac{3}{a}<0$ we \textbf{take} $\mathbf{a>6}$. The term at place (2,2) gives $T_{\ep,t}^2+\mathcal{O}(t^\frac{2}{a})$. Then 
$$
\mathbb{B}_{\ep,t}^2=\de_t g_\ep^{-1}\de_t^{-1}\left|\begin{array}{ccc}\nabla_0^2+\mathcal{O}(t^{-\g}) & 0 & 0 \\0 & T^2_\ep+\mathcal{O}(t^{\frac{2}{a}}) & 0 \\0 & 0 & (P_\ep\mathbb{B}P_\ep)^2
\end{array}\right|\de_t g_\ep\de_t^{-1}\;\;,\;\text{with}\;\;\;\;\; \g>0
$$
Now since $g_\ep=\hat{g}_\ep\oplus 1$
$$
\mathbb{B}_{\ep,t}^2=\left|\begin{array}{c|c}  \de_t \hat{g}_{\ep}^{-1}\de_t^{-1} \left|\begin{array}{cc}\nabla_0^2+\mathcal{O}(t^{-\g}) & 0 \\0 & T^2_{\ep,t}+\mathcal{O}(t^{\frac{2}{a}})\end{array}\right|\de_t \hat{g}_{\ep}\de_t^{-1}     & 0 \\\hline 0 & P_\ep\mathbb{B}P_\ep
\end{array}\right|
$$
Observe that since $g_\ep-1, g_\ep^{-1}-1\in\mathcal{N}_1$, we have
$
\de_t \hat{g}_{\ep}^{-1}\de_t^{-1}=\Id+\left|\begin{array}{cc}1&1\\1&1\end{array}\right|\mathcal{O}(t^{-\frac{1}{2}+\frac{1}{a}})
$.
%
%
Denote $w:=\mathcal{O}(t^{-\frac{1}{2}+\frac{1}{a}})$. Then
\begin{multline*}
\de_t \hat{g}_{\ep}^{-1}\de_t^{-1} \left|\begin{array}{cc}\nabla_0^2+\mathcal{O}(t^{-\g}) & 0 \\0 & T^2_{\ep,t}+\mathcal{O}(t^{\frac{2}{a}})\end{array}\right|\de_t \hat{g}_{\ep}\de_t^{-1}=  \\
=
 \left|\begin{array}{cc}1+w & w \\w & 1+w\end{array}\right|\left|\begin{array}{cc}\nabla_0^2+\mathcal{O}(t^{-\g}) & 0 \\0 & T^2_{\ep,t}+\mathcal{O}(t^{\frac{2}{a}})\end{array}\right|\left|\begin{array}{cc}1+w & w \\w & 1+w\end{array}\right|.
\end{multline*}
Since $e^{-\nabla_0^2+\mathcal{O}(t^{-\g})}= e^{-\nabla_0^2}+\mathcal{O}(t^{-\g})$, then leaving $(P+Q_\ep)$ out of the notation
\begin{multline*}
e^{-\mathbb{B}_{\ep,t}^2}=\left|\begin{array}{cc}1+w & w \\ w & 1+w\end{array}\right| \left|\begin{array}{cc}e^{-\nabla_0^2}+\mathcal{O}(t^{-\g}) & 0 \\0 & e^{-T}\end{array}\right| \left|\begin{array}{cc}1+\th & w \\w & 1+w\end{array}\right|+ e^{-(P_\ep\mathbb B P_\ep)^2}=\\
= e^{-(P_\ep\mathbb B P_\ep)^2}+\textbf{A}+\textbf{B}
\end{multline*}
where
$$
\textbf{A}=\left|\begin{array}{cc}(1+w)^2e^{-\nabla_0^2} & w(1+w)e^{-\nabla_0^2} \\w(1+w)e^{-\nabla_0^2} & w^2e^{-\nabla_0^2}\end{array}\right|=\left|\begin{array}{cc}e^{-\nabla_0^2} & 0 \\0 & 0\end{array}\right|+ \left|\begin{array}{cc}\mathcal{O}(t^{-1+\frac{2}{a}}) & \mathcal{O}(t^{-\frac{1}{2}+\frac{1}{a}}) \\ \mathcal{O}(t^{-\frac{1}{2}+\frac{1}{a}}) & \mathcal{O}(t^{-1+\frac{2}{a}})\end{array}\right|
$$
$$
\textbf{B}=\left|\begin{array}{cc}(1+w)^2  \mathcal{O}(t^{-\g}) & w(1+w)[\mathcal{O}(t^{-\g})+e^{-T}] \\w(1+w)[\mathcal{O}(t^{-\g})+e^{-T}] & w^2\mathcal{O}(t^{-\g})+(1+w)^2 e^{-T}\end{array}\right|.
$$
\begin{proof}[Proof of theorem \ref{th-eta}]
To fix notation, say $Z$ is even dimensional. In the odd case use $\tr^{even}_{\G}$ instead of $\StrG$.

Let $K\subseteq B$ be a compact, and denote as $\b=\a_K$ the Novikov--Shubin invariant on it. 

Write $\mathbb{B}_t=\mathbb{B}_{\ep,t}+A_{\ep,t}$ as in \eqref{aet}, and define $\mathbb{B}_t(z)=\mathbb{B}_{t,\ep}+zA_{t,\ep}$, $z\in[0,1]$, so that by Duhamel's principle (for example \cite[eq. (3.10)]{HL})
$$
e^{-\mathbb{B}_t^2}-e^{-\mathbb{B}_{t,\ep}^2}=\int_{0}^1\frac{d}{dz}e^{-\mathbb{B}_t(z)^2}dz=-\int_0^1 \int_0^1e^{-(s-1)\mathbb{B}^2_t(z)}\frac{d\mathbb{B}_t^2(z)}{dz}e^{-s\mathbb{B}^2_t(z)}dsdz=: F_{\ep,t}
$$
Write then 
\begin{equation}
\label{2term}
\Str{}_\G(\frac{d\mathbb{B}_t}{dt}e^{-\mathbb{B}_t^2})=\underbrace{\Str{}_\G(\frac{d\mathbb{B}_{t,\ep}}{dt}e^{-\mathbb{B}_{t,\ep}^2})}_I+\underbrace{\Str{}_\G(\frac{d\mathbb{B}_{t}}{dt}F_{\ep,t})}_{II}
\end{equation}
For the family $\ds\frac{d\mathbb{B}_t}{dt}$ we shall use that
$\ds\frac{d \mathbb{B}_t}{dt}=\frac{1}{2\sqrt t}\left( \mathcal{D}+\frac{c(T)}{4t} \right)=\frac{1}{2\sqrt t}\mathcal{D}+\mathcal O(t^{-\frac{3}{2}})$, as in Remark \ref{conv-forms}.
\subsubsection{The term I}
\begin{multline*}\frac{d\mathbb{B}_{t}}{dt}e^{-\mathbb{B}_{t,\ep}^2}=\left(\left|\begin{array}{ccc}0 & 0 & 0 \\0 & t^{-\frac{1}{2}}Q_\ep DQ_\ep & 0 \\0 & 0 & t^{-\frac{1}{2}}P_\ep DP_\ep\end{array}\right|+\mathcal{O}(t^{-\frac{3}{2}})\right) \left(e^{-(P_\ep \mathbb B P_\ep)^2}+\textbf{A}+\textbf{B}\right) =
\\
=\left|\begin{array}{ccc}0 & 0 & 0 \\0 & t^{-\frac{1}{2}}Q_\ep DQ_\ep & 0 \\0 & 0 & t^{-\frac{1}{2}}P_\ep DP_\ep\end{array}\right|\left(\left|\begin{array}{ccc}e^{-\nabla_0^2} & 0 & 0 \\0 & 0 & 0 \\0 & 0 & 0\end{array}\right|+ \left|\begin{array}{ccc}\mathcal{O}(t^{-1+\frac{2}{a}}) & \mathcal{O}(t^{-\frac{1}{2}+\frac{1}{a}}) & 0\\ \mathcal{O}(t^{-\frac{1}{2}+\frac{1}{a}}) & \mathcal{O}(t^{-1+\frac{2}{a}})& 0\\ 0& 0&0\end{array}\right|\right)+
\\
+\left|\begin{array}{ccc}0 & 0 & 0 \\0 & t^{-\frac{1}{2}}Q_\ep DQ_\ep & 0 \\0 & 0 & t^{-\frac{1}{2}}P_\ep DP_\ep\end{array}\right|\left|\begin{array}{ccc}(1+w)^2 \mathcal O(t^{-\g}) & w(1+w)^2(\mathcal O(t^{-\g})+e^{-T}) & 0 \\w(1+w)^2(\mathcal O(t^{-\g})+e^{-T}) & w^2\mathcal O (t^{-\g})+(1+w)^2e^{-T} & 0 \\0 & 0 & 0\end{array}\right|+
\end{multline*}
\begin{multline*}
+\left|\begin{array}{ccc}0 & 0 & 0 \\0 & t^{-\frac{1}{2}}Q_\ep DQ_\ep & 0 \\0 & 0 & t^{-\frac{1}{2}}P_\ep DP_\ep\end{array}\right| e^{-(P_\ep \mathbb B P_\ep)^2}=
\\
=t^{-\frac{1}{2}}P_\ep D P_\ep e^{-(P_\ep \mathbb B P_\ep)^2}+\left|\begin{array}{ccc}0 & 0 & 0 \\t^{-\frac{1}{2}}Q_\ep D Q_\ep \mathcal O(t^{-\frac{1}{2}+\frac{1}{a}}) & Q_\ep D Q_\ep \mathcal O(t^{-\frac{3}{2}+\frac{2}{a}}) & 0 \\0 & 0 & 0\end{array}\right|+
\\
+\left|\begin{array}{ccc}0 & 0 & 0 \\t^{-\frac{1}{2}}Q_\ep D Q_\ep w(1+w) (\mathcal O(t^{-\g})+e^{-T}) & t^{-\frac{1}{2}}Q_\ep D Q_\ep (w^2 \mathcal O(t^{-\g})+(1+w)^2e^{-T}) & 0 \\0 & 0 & 0\end{array}\right|.
 \end{multline*}

The choice of $a>6$ implies $\frac{2}{a}\leq\frac{1}{3}<\frac{1}{2}$. Moreover only diagonal blocks contribute\footnote{In fact if $P_i$ are orthogonal projections s.t. $\sum_i P_i=1$, then for a fibrewise operator $A$ we have  $\Str A=\tr \eta A=\tr (\sum_i P_i\eta AP_i)+\tr(\sum_{i\neq j}P_i\eta A P_j)=\tr (\sum_i P_i\eta AP_i)$.} to the $\Str_\G$,
therefore we only have to guarantee the integrability of $\Str_\G(t^{-\frac{1}{2}}P_\ep DP_\ep e^{-P_\ep\mathbb{B}^2_t P_\ep})$, because from \cite[Prop.11]{HL} $\StrG e^{-T}=\mathcal O(t^{-\de})$, $\,\forall \de>0$.
 
We reason as follows:
$
\Str{}_\G(t^{-\frac{1}{2}}P_\ep D P_\ep e^{-P_\ep\mathbb{B}^2_t P_\ep})=t^{-\frac{1}{2}}\tr{}_\G (UP_\ep)
$, 
where $U=\tau P_\ep DP_\ep e^{-P_\ep\mathbb{B}^2_t P_\ep}$, and $\tau $ is the chirality grading.
 
Next we evaluate $\tr_\G(UP_\ep)=\tr_\G(UP_\ep^2)=\tr_\G(P_\ep UP_\ep)$.
To do this, since our trace has values differential forms, let $\o_1,\dots,\o_J$ a base of $\Lambda T^*_zB$, for $z$ fixed on  $K$. $U$ is a family of operators and $U_z$ acts on $\mathcal{C}^\infty(\Mt_z,\Et_z)\otimes \Lambda T^*_zB$. Write $U_z=\sum_j U_j\otimes \o_j$. 
$$
\tr{}_\G(P_\ep UP_\ep)=\sum_j \tr{}_\G(P_\ep U_jP_\ep)\otimes \o_j=\sum_j \tr (\chi_{\mathcal{F}}P_\ep U_j P_\ep \chi_{\mathcal{F}})\otimes \o_j.
$$
Now $\;\tr(\chi_{\mathcal{F}}P_\ep U_j P_\ep \chi_{\mathcal{F}})=\sum_i<\chi_{\mathcal{F}}P_\ep U_j P_\ep \chi_{\mathcal{F}}\de_{v_i},\de_{v_i}>=\sum_i<U_j P_\ep \chi_{\mathcal{F}}\de_{v_i}, P_\ep \chi_{\mathcal{F}}\de_{v_i}>
$,  where $\{\de_{v_i}\}$ is a base of $L^2(\Mt_z{}_{|\mathcal{F}},\Et_z{}_{|\mathcal{F}})$. Therefore
$$
|<U_j P_\ep \chi_{\mathcal{F}}\de_{v_i}, P_\ep \chi_{\mathcal{F}}\de_{v_i}>|\leq \n{U_j P_\ep \chi_{\mathcal{F}}\de_{v_i}}\cdot \n{P_\ep \chi_{\mathcal{F}}\de_{v_i}}\leq
$$
$$
\leq \n{U_j}\n{P_\ep \chi_{\mathcal{F}}\de_{v_i}}^2\leq \n{U_z}\n{P_\ep \chi_{\mathcal{F}}\de_{v_i}}^2.
$$ Now
$
\sum_i \n{P_\ep \chi_{\mathcal{F}}\de_{v_i}}=
\sum_i<P_\ep \chi_{\mathcal{F}}\de_{v_i},P_\ep \chi_{\mathcal{F}}\de_{v_i}>= \sum_i<\chi_{\mathcal{F}}P_\ep \chi_{\mathcal{F}}\de_{v_i},\de_{v_i}>=\tr{}_\G(P_\ep)=\mathcal{O}(\ep^\beta)
$
where $\b=\a_K$. Hence
$$
\tr{}_{\G}(P_\ep UP_\ep)\leq \n{U}\mathcal{O}(\ep^\b)=\n{U}\mathcal{O}(t^{-\frac{\b}{a}})\;\; ,\;\;\text{with}\;\; \ep=t^{-\frac{1}{a}}
$$
\textbf{Claim} (\cite[Lemma 13]{HL})\textbf{:}   \emph{$\n{t^{-\frac{q}{2}}U}$ is bounded independently of $t$, for $t$ large.}
This follows because $(P_\ep\mathbb B P_\ep)^2= P_\ep D^2 P_\ep-\bar{C}_t$, with $\bar{C}_t$ is a fibrewise differential operator of order at most one with uniformly bounded coefficients. Therefore $\n{t^{-\frac{1}{2}}\bar{C}_t}_{l,l-1}$ is bounded independently of $t$, for $t$ large. Now writing the Volterra series for $e^{-t(P_\ep D^2 P_\ep)^2+\bar{C}_t}$, we have $U=\tau P_\ep \sum_k \int_{\Delta_k} e^{-t\s_0P_\ep D^2P_\ep}\bar{C}_te^{-t\s_1P_\ep D^2P_\ep}\dots \bar{C}_te^{-t\s_kP_\ep D^2P_\ep }d\underline{\s}$, then estimating each addend as
\begin{multline*} 
\n{e^{-t\s_0P_\ep D^2P_\ep}\bar{C}_te^{-t\s_1P_\ep D^2P_\ep}}_{l,l}\leq \\
\leq \n{\tau P_\ep D e^{-t \s_0P_\ep D^2P_\ep} }_{l,l+1}\n{\bar{C}_t}_{l+1,l} \n{e^{-t\s_1P_\ep D^2P_\ep}}_{l,l+1}\cdot\dots\cdot \n{\bar{C}_t}_{l+1,l}\n{e^{-t\s_kP_\ep D^2P_\ep}}_{l,l+1}
\end{multline*}
we get the Claim. 

Thus $t^{-\frac{1}{2}}\tr_\G(UP_\ep)\leq c\n{U}t^{-\frac{\b}{a}-\frac{1}{2}}$, and 
$
\Str{}_{\G}(\frac{d\mathbb{B}_t}{dt}e^{-\mathbb{B}_{t,\ep}^2})\leq c t^{\frac{q}{2}-\frac{\b}{a}-\frac{1}{2}}$.
We require then 
$ \frac{q-1}{2}-\frac{\b}{a}<-1$
to have integrability hence we need finally $a<\frac{2\b}{q+1}$. Because $a$ was also required to be $a>6$ (see lines after Lemma \ref{est-rest}), the hypothesis 
\begin{equation}
\label{n.s.condition}
\b>3(q+1)\end{equation}
 is a sufficient condition to have the first term in \eqref{2term} equal $\mathcal O(t^{-1-\de})$, with $\de>0$.

\subsubsection{The term II}
Now let's consider the second term in \eqref{2term}. 
As in \cite[pag.197-198]{HL}, write $\mathbb{B}_t=\sqrt{t}D+\mathbb{B}_1+\frac{1}{\sqrt{t}}\mathbb{B}_2$, and locally $\mathbb{B}_1=d+\Phi$. We have $\ds\frac{d\mathbb{B}_t^2(z)}{dz}=\mathbb{B}_t(z)A_{\ep,t}+A_{\ep,t}\mathbb{B}_t(z)= 
\sqrt{t}D A_1+A_2\sqrt{t}D+A_3$,
 where $A_i=C_{i,1}P_\ep C_{i,2}$,  and $C_{i,j}\in \mathcal{M}_1$ are sums of words in $\Phi$, $d(\Phi)$, $t^{-\frac{1}{2}}\mathbb{B}_{[2]}$, $t^{-\frac{1}{2}}d(\mathbb{B}_{[2]})$. 
This implies that $C_{i,j}$ are differential operators with coefficients uniformly bounded in $t$. 
\begin{multline*}
\Str{}_\G\left(\frac{d\mathbb{B}_t}{dt}F_{\ep,t}\right)=\tr{}_\G \tau(t^{-\frac{1}{2}}D-t^{-\frac{3}{2}}\mathbb{B}_{[2]})\int_0^1\int_0^1 e^{-(s-1)\mathbb{B}^2_t(z)}(\sqrt{t}DC_{1,1}P_\ep C_{1,2}+\\
+
C_{2,1}P_\ep C_{2,2}\sqrt{t}D+C_{3,1}P_\ep C_{3,2})e^{-s\mathbb{B}^2_t(z)}dsdz=\\
=\tr{}_\G \int_0^1\int^1_0 \left[C_{1,2}e^{-s\mathbb{B}^2_t(z)}\tau \left(\frac{D}{\sqrt t}-\frac{\mathbb{B}_{[2]}}{\sqrt t^{3}}\right) e^{-(s-1)\mathbb{B}^2_t(z)}\sqrt t D C_{1,1}P_\ep\right.+\\
+C_{2,2}\sqrt t D e^{-s\mathbb{B}^2_t(z)}\tau \left(\frac{D}{\sqrt t}-\frac{\mathbb{B}_{[2]}}{\sqrt t^{3}}\right) e^{-(s-1)\mathbb{B}^2_t(z)}C_{2,1}P_\ep+\\
\left.+ C_{3,2}e^{-s\mathbb{B}^2_t(z)}\tau \left(\frac{D}{\sqrt t}-\frac{\mathbb{B}_{[2]}}{\sqrt t^{3}}\right) e^{-(s-1)\mathbb{B}^2_t(z)}
C_{3,1}P_\ep\right] ds dz= \tr{}_\G (P_\ep WP_\ep)
\end{multline*}
with $W$ the term in square brackets.

With a similar argument as in the Claim above and as in \cite[p. 199]{HL}, we have that $\n{t^{-\frac{q}{2}}e^{-s\mathbb{B}^2_t(z)}\tau e^{-(s-1)\mathbb{B}^2_t(z)} }$ is bounded independently of $t$ as $t\rightarrow \infty$ so that the condition \eqref{n.s.condition} on the Novikov--Shubin exponent guaranties that the term \emph{II.} is $\mathcal O(t^{-1-\de})$ as $t\rightarrow\infty$ as well.
\end{proof}
Theorem \ref{th-eta} and Proposition \ref{intloc} taken together imply
\begin{cor}\label{coreta}
If $\mathcal{\tilde{D}}$ has regular spectral projections and regularity $>3(\dim B+1)$
\begin{equation*}
\hat{\eta}_{(2)}(\mathcal{\tilde{D}})=\left\{\left.\begin{array}{c}\ds\int_0^\infty \Str_\G \ds \left(\frac{d\mathbb{\tilde{B}}_t}{dt}e^{-\mathbb{\tilde{B}}_t^2}\right) dt\;\; \text{if}\; \dim \tilde{Z}=\text{ even}
  \\\ds \int_0^\infty \tr^{even}_\G \ds \left(\frac{d\mathbb{\tilde{B}}_t}{dt}e^{-\mathbb{\tilde{B}}_t^2}\right) dt\;\; \text{if}\; \dim \tilde{Z}=\text{ odd}
\end{array}\right.\right.
\end{equation*}
is well defined as a continuos differential form on $B$.
\end{cor}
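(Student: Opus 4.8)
The plan is to assemble the corollary from two convergence facts about the integrand, one at each end of the interval $[0,\infty)$, and then to upgrade pointwise-in-$z$ integrability to a statement about $C^0$ (indeed continuous) differential forms on $B$ by invoking the uniformity on compact sets that is already built into Proposition \ref{intloc} and Theorem \ref{th-eta}. Concretely, writing the integrand as $\omega_t := \Str_\G\!\left(\frac{d\mathbb{\tilde{B}}_t}{dt}e^{-\mathbb{\tilde{B}}_t^2}\right)$ in the even case (and $\tr^{even}_\G$ of the same expression in the odd case, the two being handled verbatim in the same way), I would split $\int_0^\infty \omega_t\,dt = \int_0^1 \omega_t\,dt + \int_1^\infty \omega_t\,dt$. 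The first piece converges by Proposition \ref{intloc}, which says precisely that $\omega_t$ is integrable on $[0,1]$ uniformly on compact subsets of $B$; the second converges because Theorem \ref{th-eta} gives $\omega_t = \mathcal O(t^{-1-\delta})$ as $t\to\infty$ for some $\delta>0$, again with constants that are uniform on each compact $K\subseteq B$ (that uniformity is how the proof of Theorem \ref{th-eta} was organized, since $\beta=\alpha_K$ enters all the estimates). Hence for every compact $K$ the integral defining $\hat\eta_{(2)}(\mathcal{\tilde D})$ converges in $\sup$-norm over $K$, so it defines a well-defined differential form on $B$.

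Next I would address continuity of the resulting form. The natural route is to note that $t\mapsto \omega_t$ is a $C^0$-continuous path of differential forms on $B$ for $t\in(0,\infty)$ — this is the content of the remarks in \ref{conv-forms} combined with the explicit smooth ($C^0$, in fact better) dependence of the heat kernel $\tilde p^z_t(x,y)$ on $z\in B$ established just after \eqref{heat} and the estimate \eqref{decad.esp}. Since the two bounding functions $g_0(t) := \sup_{z\in K}\|\omega_t(z)\|$ (integrable near $0$ by Prop. \ref{intloc}) and $g_\infty(t) := C_K t^{-1-\delta}$ (integrable near $\infty$ by Thm. \ref{th-eta}) dominate $\|\omega_t(z)\|$ uniformly over $K$, a dominated-convergence argument on $[0,\infty)$ against Lebesgue measure shows that the integral $z\mapsto \int_0^\infty \omega_t(z)\,dt$ is a uniform-on-$K$ limit of continuous forms (the partial integrals $\int_{1/n}^n \omega_t\,dt$), hence is itself continuous on $K$; letting $K$ exhaust $B$ gives continuity on all of $B$. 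For the odd case the identity $\tr^\sigma_\G e^{-(\mathbb{\tilde B}^\sigma_t)^2}=\tr^{odd}_\G e^{-\mathbb{\tilde B}_t^2}$ recorded before \eqref{transgs} lets one phrase everything in terms of the differential operator $\mathbb{B}$ rather than the $\Cli$-superconnection, so no new analysis is needed.

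The only genuine subtlety — and the step I would flag as the main obstacle to state cleanly — is making sure the integrability estimates are \emph{honestly} uniform on compact sets at both ends simultaneously, so that the dominating functions $g_0$ and $g_\infty$ can be chosen once per compact $K$ rather than per point $z$; for the large-$t$ end this is fine because $\alpha_K = \inf_{z\in K}\alpha_z$ was introduced exactly for this reason and all the $\mathcal O$-bounds in the proof of Theorem \ref{th-eta} carry the constant $c=c(K)$, while for the small-$t$ end Proposition \ref{intloc} already asserts uniformity on compacta. Granting those two inputs, the corollary is a formal consequence — one records that $\hat\eta_{(2)}(\mathcal{\tilde D}) = \int_0^1 \Str_\G\!\big(\tfrac{d\mathbb{\tilde B}_t}{dt}e^{-\mathbb{\tilde B}_t^2}\big)\,dt + \int_1^\infty \Str_\G\!\big(\tfrac{d\mathbb{\tilde B}_t}{dt}e^{-\mathbb{\tilde B}_t^2}\big)\,dt$ with both summands finite $C^0$-forms on $B$, the first by Prop. \ref{intloc} and the second by Thm. \ref{th-eta}, and that continuity passes to the (locally uniform) limit; the odd case is identical after the substitution $\Str_\G \rightsquigarrow \tr^{even}_\G$.
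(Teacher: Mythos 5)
Your argument is correct and is exactly the one the paper intends: the paper states the corollary as an immediate consequence of Theorem \ref{th-eta} (giving $\mathcal O(t^{-1-\delta})$ uniformly on compacts at $t\to\infty$) and Proposition \ref{intloc} (giving uniform-on-compacts integrability on $[0,1]$), and your splitting at $t=1$ together with the dominated-convergence step for continuity is precisely the detail being left implicit. The odd-case reduction via $\tr^\sigma_\G = \tr^{odd}_\G$ is also as in the paper.
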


\begin{remark} \label{conv+} Theorem \ref{th-eta} gives $\hat{\eta}_{(2)}$ as a continuos form on $B$. Therefore $\hat{\eta}_{(2)}$ fits into a weak $L^2$-local index theorem (see \cite{GR, AGS}).  To get a strong local index theorem one should prove estimates for $\StrG(\ds\frac{d\mathbb{B}_t}{dt}e^{-\mathbb{B}_t^2})$ in $C^1$-norm, assuming more regularity on $\a_K$. 
\end{remark}

\begin{remark}
If $Z$ odd dimensional,  $\hat{\rho}_{(2)}$ is an even degree differential form, whose zero degree term is a continuos function on $B$ with values the Cheeger--Gromov $L^2$-eta invariant of the fibre, $\hat{\eta}_{(2)}^{[0]}(b)=\eta_{(2)}(D_b, \Mt_b\rightarrow M_b)$.
\end{remark}

\subsection{Case of uniform invertibility}\label{unifinv}
Suppose the two families $\mathcal D$ and $\tilde{\mathcal D}$ are both uniformly invertible, i.e. 
\begin{equation}\label{inv} 
\exists \mu>0 \text{ such that } \;\forall b\in B \;\;\; \left\{\begin{array}{c}\spec(D_b)\cap (-\mu,\mu)=\emptyset \\ 
\spec(\tilde{D}_b)\cap (-\mu,\mu)=\emptyset
\end{array}\right.
\end{equation}
In this case the $t\rightarrow\infty$ asymptotic is easy and in particular $\ds\Str_\G(\frac{d\mathbb{B}_t}{dt}e^{-\mathbb{B}_t^2})=\mathcal O(t^{-\de})$, $\forall \de>0$ \cite{AzT}. With the same estimates (see \cite[p. 194]{HL}) one can look at $\ds\frac{\partial}{\partial b}\Str_\G(\frac{d\mathbb{B}_t}{dt}e^{-\mathbb{B}_t^2})$ and obtain that 
$\Str_\G(\frac{d\mathbb{B}_t}{dt}e^{-\mathbb{B}_t^2})\stackrel{C^1}{=}\mathcal O(t^{-\de})$,  $\forall \de>0$.

\section{The $L^2$ rho form}

\begin{definition}
\label{rho2}
Let $(\pi\colon M\rightarrow B, g^{M/B}, \mathcal V, E)$ be a geometric family, $p\colon \Mt\rightarrow M$ a normal covering of it. Assume that $\ker \mathcal{D}$ forms a vector bundle, and that the family $\mathcal{\tilde{D}}$ has regular projections with family Novikov--Shubin invariants $\a_K>3(\dim B+1)$. We define the \emph{$L^2$-rho form} to be the difference
$$
\hat{\rho}_{(2)}(M,\Mt,\mathcal{D}):=\hat{\eta}_{(2)}(\tilde{\mathcal{D}})
-\hat{\eta}(\mathcal{D})\;\;\in \mathcal{C}^0(B,\Lambda T^*B).
$$
\end{definition}
\begin{remark}
When the fibres are odd dimensional,  $\hat{\rho}_{(2)}$ is an even degree differential form, whose zero degree term is a continuos function on $B$ with values the Cheeger--Gromov $L^2$-rho invariant of the fibre, $\hat{\rho}_{(2)}^{[0]}(b)=\rho_{(2)}(D_b,\Mt_b\rightarrow M_b)$.
\end{remark}
We say a continuos $k$-form $\phi$ on $B$ \emph{has weak exterior derivative $\psi$} (a $(k+1)$-form) if, for each smooth chain $c\colon \Delta_{k+1}\rightarrow B$, it holds $\ds\int_c\psi=\int_{\partial c}\phi$, and we write $d\phi=\psi$.
\begin{proposition}\label{wclosed} If $\pi\colon M\rightarrow B$ has odd dimensional fibres,  
 $\hat{\rho}_{(2)}(\mathcal{D})$ is weakly closed.
\end{proposition}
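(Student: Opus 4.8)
The plan is to express the $L^2$-rho form as a boundary term via the transgression formula \eqref{transgs}, applied both to the covering family $\tilde{\mathcal D}$ and to the family $\mathcal D$ on the compact fibres, and then let $t\to 0$ and $t\to\infty$. Concretely, integrating \eqref{transgs} from $t$ to $T$ gives, for each smooth chain $c\colon\Delta_{k+1}\to B$ (after integrating over $c$ and using Stokes for forms with weak exterior derivative),
\[
\int_c d\!\left(\int_t^T \Tr{}^{even}_\G\Big(\tfrac{d\tilde{\mathbb B}_s}{ds}e^{-\tilde{\mathbb B}_s^2}\Big)\,ds\right)
=\int_{\partial c}\left(\Tr{}^{odd}_\G(e^{-\tilde{\mathbb B}^2_t})-\Tr{}^{odd}_\G(e^{-(\tilde{\mathbb B}^\sigma)^2_T})\right),
\]
and an analogous identity downstairs for $\mathcal D$. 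Subtracting, the local index densities on the right cancel in the limit $t\to 0$ by Proposition \ref{teoloc} (both families have the \emph{same} $\hat A(M/B)\ch E/S$ integrand), so the $t\to0$ contribution to $\hat{\rho}_{(2)}$ is exactly controlled and the difference of the $t\to 0$ limits of $\Tr^{odd}$ is zero. First I would make this cancellation precise and record that $\int_0^1$ of the transgression integrands converges (Proposition \ref{intloc}, and its evident analogue for the compact family).

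Next I would treat the $t\to\infty$ end. For the covering family, Theorem \ref{th-eta} gives $\tr^{even}_\G\big(\tfrac{d\tilde{\mathbb B}_t}{dt}e^{-\tilde{\mathbb B}_t^2}\big)=\mathcal O(t^{-1-\delta})$, so $\int_T^\infty$ of it tends to $0$ as $T\to\infty$; equivalently $\Tr^{odd}_\G(e^{-(\tilde{\mathbb B}^\sigma)^2_T})$ converges as $T\to\infty$ to the Heitsch--Lazarov limit $\Tr^{odd}_\G e^{-\tilde\nabla_0^2}$ (Theorem \ref{thHL}; here $\ker\mathcal D$, hence $\ker\tilde{\mathcal D}$ through the assumption that kernels form bundles, is a smooth bundle so $\tilde\nabla_0$ is a genuine connection on a finite-rank bundle). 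For the compact family, the classical Bismut--Cheeger theory gives that $\tr^{odd}(e^{-\mathbb B^2_T})\to \tr^{odd}e^{-\nabla_0^2}$, the Chern character form of the index bundle $\ker\mathcal D$. The crucial point is that both $\tilde\nabla_0$ and $\nabla_0$ are connections on the \emph{same} finite-dimensional bundle $\ker\mathcal D\to B$: on the covering, $\tr_\G$ restricted to the fibrewise-finite-rank projection $\tilde P$ computes, up to the covering multiplicity which is here just the data of $\ker\mathcal D$ pulled back, the ordinary trace, and the induced $\Gamma$-invariant connection $\tilde\nabla_0=\tilde P\nabla^{\tilde{\mathcal H}}\tilde P$ is the pull-back of $\nabla_0=P\nabla^{\mathcal H}P$. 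Hence $\Tr^{odd}_\G e^{-\tilde\nabla_0^2}=\tr^{odd}e^{-\nabla_0^2}$, and the two $t\to\infty$ boundary terms cancel in the difference.

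Putting the two ends together: for any smooth $(k+1)$-chain $c$,
\[
\int_c d\hat{\rho}_{(2)}(\mathcal D)
=\int_{\partial c}\lim_{t\to 0}\Big(\Tr{}^{odd}_\G(e^{-\tilde{\mathbb B}^2_t})-\tr{}^{odd}(e^{-\mathbb B^2_t})\Big)
-\int_{\partial c}\Big(\Tr{}^{odd}_\G e^{-\tilde\nabla_0^2}-\tr{}^{odd}e^{-\nabla_0^2}\Big)=0,
\]
the first bracket vanishing by Proposition \ref{teoloc} and the second by the identification of connections above; thus $d\hat{\rho}_{(2)}(\mathcal D)=0$ in the weak sense. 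Along the way one should note that in the odd-fibre case the relevant forms have even total degree, so it is $\tr^{even}$ of the transgression integrand (the derivative term) and $\tr^{odd}$ of the heat form itself that appear, consistently with \eqref{transgs}; no genuinely new estimate is needed here beyond Theorem \ref{th-eta} and Proposition \ref{intloc}.

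The main obstacle I expect is the $t\to\infty$ identification rather than the $t\to 0$ one: one must argue carefully that the Heitsch--Lazarov large-time limit $\Tr^{odd}_\G e^{-\tilde\nabla_0^2}$ for the covering family equals the finite-dimensional Chern--Simons-type form $\tr^{odd}e^{-\nabla_0^2}$ of the compact family, i.e. that passing to the covering does not change the large-time limit because it only sees the (common) kernel bundle. This uses that $\ker\tilde{\mathcal D}_b$ is $\Gamma$-cofinite with $\tr_\Gamma\tilde P = \dim\ker D_b$ fibrewise constant (so $\ker\mathcal D$ is a bundle, which is assumed), and that $\tilde P$, $\nabla^{\tilde{\mathcal H}}$ are $p$-pullbacks of $P$, $\nabla^{\mathcal H}$; granting the regularity hypotheses of Definition \ref{rho2}, which are exactly those of Theorems \ref{th-eta} and \ref{thHL}, this identification is then a formal consequence of the covering being a local isometry on kernels. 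Once this is in hand, the weak closedness follows purely formally from the two transgression identities and Stokes' theorem for forms with weak exterior derivative.
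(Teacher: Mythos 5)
Your overall strategy — transgression formula for both families, Stokes for weak exterior derivatives, cancellation of the $t\to0$ local index densities, and control of the $t\to\infty$ boundary terms — is the right one and matches the paper's plan. The $t\to 0$ end of your argument is fine. But your treatment of the $t\to\infty$ end is based on a false premise, and fortunately the correct argument is considerably simpler.

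You claim that $\tilde P$ and $\nabla^{\tilde{\mathcal H}}$ are the $p$-pullbacks of $P$ and $\nabla^{\mathcal H}$, that $\ker\tilde{\mathcal D}_b$ is the pullback of $\ker\mathcal D_b$ with $\tr_\G\tilde P=\dim\ker D_b$, and hence that $\tilde\nabla_0$ and $\nabla_0$ are connections on the same finite-rank bundle giving $\Tr^{odd}_\G e^{-\tilde\nabla_0^2}=\tr^{odd}e^{-\nabla_0^2}$. This is not true in general: the $L^2$-kernel of a lifted Dirac operator on a noncompact normal covering is not the pullback of the kernel downstairs, and $\tr_\G\tilde P$ (a von Neumann dimension, a priori any nonnegative real) is generically different from $\dim\ker D_b$; this is precisely why $L^2$-Betti numbers differ from ordinary Betti numbers and why the $L^2$-rho invariant is a nontrivial invariant at all. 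Were your claim correct, the covering family's eta form would identically equal the compact family's eta form and $\hat\rho_{(2)}$ would always vanish, trivializing the whole construction.

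The observation you need is instead a parity count, with no identification required between the upstairs and downstairs kernel data. Both $\nabla_0^2$ and $\tilde\nabla_0^2$ are curvatures of ordinary (non-super) connections, i.e.\ $2$-forms with values in endomorphisms, so $\tr\bigl(e^{-\nabla_0^2}\bigr)$ and $\Tr_\G\bigl(e^{-\tilde\nabla_0^2}\bigr)$ are sums of forms of even degree. In the odd-fibre case the heat form one transgresses is $\tr^{odd}$ of these, and the odd-degree part of an even-degree form is zero. Hence
\[
\lim_{T\to\infty}\tr{}^{odd}\bigl(e^{-\mathbb B_T^2}\bigr)=\bigl(\tr e^{-\nabla_0^2}\bigr)^{odd}=0,\qquad
\lim_{T\to\infty}\Tr{}^{odd}_\G\bigl(e^{-\tilde{\mathbb B}_T^2}\bigr)=\bigl(\Tr{}_\G e^{-\tilde\nabla_0^2}\bigr)^{odd}=0,
\]
each individually and not merely their difference. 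Substituting these into the two transgression identities and subtracting gives $\int_{\partial c}\hat\rho_{(2)}(\mathcal D)=0$ directly, without ever comparing $\tilde P$ to $P$. You should replace your $t\to\infty$ identification argument with this parity observation; the rest of your proof is correct and follows the paper's approach.
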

\begin{proof}
 From \eqref{transgs},
$\ds
\int_c\trG{}^{odd}e^{-\tilde{\mathbb{B}}_t^2}-\int_c\trG{}^{odd} e^{-\tilde{\mathbb{B}}^2_T}=\int_{\partial c}\int_t^T\trG{}^{even}\left(\frac{\partial \mathbb{\tilde{B}}_t}{\partial t} e^{-\mathbb{\tilde{B}}_t^2}\right)dt$. 
Taking the limits $t\rightarrow 0$, $T\rightarrow \infty$ we get
\begin{equation*}
\int_c \int_{M/B}\hat{A}(M/B)\ch(E/S)=\int_{\partial c}\hat{\eta}_{(2)}(\tilde{\mathcal{D}})
\end{equation*}
because $
\lim_{T\rightarrow \infty}\tr{}^{odd} e^{-\mathbb{B}_T^2}=\tr(e^{-\nabla_0^2})^{odd}=0
$ because $\tr(e^{-\nabla_0^2})$ is a form of even degree.
The same happens for the family $\tilde{\mathcal{D}}$ where $\ds \int_{M/B}\hat{A}(M/B)\ch(E/S)=d\hat{\eta}(\tilde{\mathcal{D}})\,$ (strongly). Then $\ds\int_{\partial c}\hat{\rho}_{(2)}(\mathcal{D})=0$, which gives the result.
\end{proof}

\begin{cor}
Under uniform invertibility hypothesis \eqref{inv} the form $\hat{\rho}_{(2)}(\mathcal{D})$ is always (strongly) closed.
\end{cor}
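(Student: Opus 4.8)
The plan is to show that, under \eqref{inv}, both $\hat\eta_{(2)}(\tilde{\mathcal D})$ and $\hat\eta(\mathcal D)$ have exterior derivative equal to the \emph{same} local index form $\int_{M/B}\hat A(M/B)\ch(E/S)$ — as an honest identity of continuous differential forms on $B$, not merely a weak one — so that the difference $\hat\rho_{(2)}(\mathcal D)$ is strongly closed. This parallels Proposition \ref{wclosed}, the point being that uniform invertibility replaces the weak argument there by a genuine one and removes the parity restriction.

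First I would record that \eqref{inv} forces $\ker\mathcal D=\ker\tilde{\mathcal D}=0$, hence the spectral projections $P=\tilde P=0$ (and $\tilde P_\ep=0$ for $\ep<\mu$), so $\tilde\nabla_0=0$; in particular $\Str_\G e^{-\tilde\nabla_0^2}=0$ in the even case and $\tr^{odd}_\G e^{-\tilde\nabla_0^2}=0$ in the odd case (a form of even degree). The analytic input is Section \ref{unifinv}: $\Str_\G\bigl(\tfrac{d\tilde{\mathbb B}_t}{dt}e^{-\tilde{\mathbb B}_t^2}\bigr)\stackrel{C^1}{=}\mathcal O(t^{-\delta})$ for every $\delta>0$ as $t\to\infty$, while Proposition \ref{intloc}, read together with the rescaled heat-kernel asymptotics of Section \ref{sec.loc} (which are uniform on compacta of $B$ together with base derivatives), gives $C^1$-integrability on $[0,1]$. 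Consequently $\hat\eta_{(2)}(\tilde{\mathcal D})$ is a $C^1$ form and $d$ commutes with $\int_0^\infty$ and with the limits $t\to0$, $T\to\infty$.

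Applying $d$ to the transgression formula \eqref{transg} integrated over $(t,T)$, then letting $t\to0$ (Proposition \ref{teoloc}) and $T\to\infty$ (the vanishing above) yields, strongly,
$$d\,\hat\eta_{(2)}(\tilde{\mathcal D})=\int_{M/B}\hat A(M/B)\ch(E/S),$$
and the same in the odd case starting from \eqref{transgs}. The identical computation — or the classical Bismut--Cheeger local family index theorem, whose index-bundle correction vanishes since $\ker\mathcal D=0$ — gives $d\,\hat\eta(\mathcal D)=\int_{M/B}\hat A(M/B)\ch(E/S)$ for the compact family $\mathcal D$. The two right-hand sides coincide as forms on $B$: $p$ is a fibrewise local diffeomorphism intertwining $g^{M/B}$, $\mathcal V$, $c$ and $\nabla^E$, so the $\hat A\ch$ integrand densities agree, and the $\Gamma$-trace on $\tilde M\to B$ is nothing but integration over a fundamental domain, i.e.\ over the compact fibre $Z$. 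Subtracting the two identities gives $d\,\hat\rho_{(2)}(\mathcal D)=0$.

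The only delicate point is the interchange of $d$ with the limits and the improper integral — that is, having the estimates in $C^1$ rather than merely $C^0$ norm. For $t\to\infty$ this is exactly the content of Section \ref{unifinv} (where invertibility makes the large-time analysis elementary, with no hypothesis on the Novikov--Shubin invariants), and for $t\to0$ it is the standard fact that the local-index asymptotic expansion holds uniformly on compact subsets together with derivatives in the base directions. Everything else is bookkeeping with the transgression identities \eqref{transg} and \eqref{transgs}.
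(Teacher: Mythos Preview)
Your argument is correct and is exactly the approach the paper takes: use the transgression formul\ae{} \eqref{transg}, \eqref{transgs}, the small-time asymptotics of Section \ref{sec.loc}, and the $C^1$ large-time estimates from Section \ref{unifinv} to obtain $d\hat\eta(\mathcal D)=\int_{M/B}\hat A(M/B)\ch(E/S)=d\hat\eta_{(2)}(\tilde{\mathcal D})$ as an identity of forms, whence $d\hat\rho_{(2)}(\mathcal D)=0$. The paper's own proof is a one-line reference to these ingredients; you have simply spelled out the details, in particular the point that the $C^1$ control is what upgrades the weak closedness of Proposition \ref{wclosed} to strong closedness (and removes the parity hypothesis).
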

\begin{proof}  
The argument is standard: from transgression formul\ae{}  \eqref{transg} \eqref{transgs}, asymptotic behaviour, and Remark \ref{conv+}, we have 
$\ds d\hat{\eta}(\mathcal{D})=\int_{M/B}  \hat{A}(M/B)\ch(E/S)= d \hat{\eta}_{(2)}(\tilde{\mathcal{D}})$.
\end{proof}

\section{$\hat{\rho}_{(2)}$ and positive scalar curvature for spin vertical bundle}
\label{sec:PSC}
Let $\pi\colon M\rightarrow B$ be a smooth fibre bundle with compact base $B$.
If $\hat{g}$ denotes a metric on the vertical tangent bundle $T(M/B)$, and $b\in B$, denote with $\hat{g}_b$ the metric induced on the fibre $M_b$, and write $\hat{g}=(\hat{g}_b)_{b\in B}$. 
Define \begin{equation*}
\mathcal{R}^+(M/B):=\{\hat{g}\;\;\text{metric on } \;T(M/B) \;|\; \scal{}\hat{g}_b>0\;\;\forall b\in B\}
\end{equation*}
to be the space of positive scalar curvature vertical metrics (= PSC).

Assume that $T(M/B)$ is spin and let $\hat{g}\in \mathcal{R}^+(M/B)\neq\emptyset$. By Lichnerowicz formula the family of Dirac operators $\cDi_{\hat{g}}
$ is uniformly invertible. 
Let $p\colon \Mt\rightarrow M$ be a normal $\G$-covering of $\pi$, with $\Mt\rightarrow B$ having connected fibres, and denote with $r\colon M\rightarrow B\G$ the map classifying it. The same holds for $\tilde{\cDi}_{\hat{g}}$, so that we are in the situation of \eqref{unifinv}.

On the space $\mathcal{R}^+(M/B) $ we can define natural relations, following \cite{PS2}.  We say $\hat{g}_0$, $\hat{g}_1\in \mathcal{R}^+(M/B)$ are \textit{path-connected} if there exists a continuos path $\hat{g}_t\in \mathcal{R}^+(M/B) $ between them. 


We say $\hat{g}_0$ and $\hat{g}_1$ are \textit{concordant} if on the bundle of the cylinders $\Pi\colon M\times I\rightarrow B$, $\Pi(m,t)=\pi(m)$, there exists a vertical metric $\hat{G}$ such that: $\forall b\in B$ $\hat{G}_b$ is of product-type near the boundary, $\scal(\hat{G}_b)>0$, and on $M\times\{i\}\rightarrow B$ it coincides with $\hat{g}_i$, $i=0,1$.

\begin{proposition}\label{conc}
Let $\pi\colon M\rightarrow B$ be a smooth fibre bundle with $T(M/B)$ spin and $B$ compact. Let $p\colon\Mt\rightarrow M$ be a normal $\G$-covering of the fibre bundle, such that $\Mt\rightarrow B$ has connected fibres. Then the rho class $[\hat{\rho}_{(2)}(\cDi)]\in H_{dR}^*(B)$ is constant on the concordance classes of $\mathcal{R}^+(M/B)$.
\end{proposition}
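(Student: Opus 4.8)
The plan is to realise a concordance as a geometric family over the cylinder and to apply the transgression machinery of Sections 3--4 to that family, just as in the classical Atiyah--Patodi--Singer cobordism argument for $\rho$-invariants. Concretely, suppose $\hat G$ is a concordance between $\hat g_0$ and $\hat g_1$, living on the cylinder bundle $\Pi\colon M\times I\rightarrow B$ with each $\hat G_b$ of product type near $\partial(M_b\times I)$ and $\scal(\hat G_b)>0$. Since $T(M/B)$ is spin, $T((M\times I)/B)$ inherits a spin structure and $\hat G$ gives a positive-scalar-curvature vertical metric; by Lichnerowicz the associated vertical Dirac family $\cDi_{\hat G}$ and its lift $\tilde\cDi_{\hat G}$ to the covering $\Mt\times I\rightarrow M\times I$ are uniformly invertible, so we are in the situation of Section \ref{unifinv} and all the $t\to 0$ and $t\to\infty$ estimates (Propositions \ref{intloc}, and the uniform-invertibility remarks with $C^1$-control) apply on the compact base $B$.

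Next I would run the transgression formula \eqref{transgs} for the cylinder family on a finite interval $[t,T]$ and pass to the limits $t\to 0$, $T\to\infty$. The $t\to 0$ limit produces, by Proposition \ref{teoloc} applied to $M\times I\rightarrow B$, the integral $\int_{(M\times I)/B}\hat A\,\ch(E/S)$; the $t\to\infty$ limit of $\tr^{odd}_\G e^{-\tilde{\mathbb B}_T^2}$ vanishes because the family is invertible (the kernel bundle is zero, so $\nabla_0=0$). Hence $\hat\eta_{(2)}(\tilde\cDi_{\hat G})$ is a genuine form on $B$ with $d\hat\eta_{(2)}(\tilde\cDi_{\hat G})=\int_{(M\times I)/B}\hat A\,\ch(E/S)$ in the strong sense (and similarly for $\hat\eta(\cDi_{\hat G})$), so the cylinder's $L^2$-rho form $\hat\rho_{(2)}(\cDi_{\hat G})$ is closed and therefore exact up to the boundary contribution.

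The key step is then a gluing/cut-and-paste identity: because $\hat G$ is of product type near $M\times\{0\}$ and $M\times\{1\}$, the vertical Dirac operator on the cylinder near the two ends is $\pm(\partial_u + \cDi_{\hat g_i})$, and a standard parametrix/Duhamel comparison of the heat kernels on the cylinder with those on the two ends shows that $d\hat\rho_{(2)}(\cDi_{\hat G})$ ``localises at the boundary'' and equals $\hat\rho_{(2)}(\cDi_{\hat g_1}) - \hat\rho_{(2)}(\cDi_{\hat g_0})$ up to an exact form; equivalently one writes the variation of $\hat\eta_{(2)}$ along the path of invertible operators interpolating the boundary data and uses that the interior contributions $\int_{(M\times I)/B}\hat A\,\ch(E/S)$ cancel in the rho difference (the local index density is metric-independent and pulled back, hence contributes nothing to the $L^2$-rho form). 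I expect the main obstacle to be precisely this boundary-localisation step: one must control the family of heat kernels on the non-compact cylinder fibres $\tilde M_b\times I$ uniformly in the base and show that the difference between the cylinder eta form and the sum of the endpoint eta forms is exact, which requires adapting the Atiyah--Patodi--Singer product-end analysis to the $L^2$-superconnection setting with $C^1$-control in $b\in B$; once that is in place, $[\hat\rho_{(2)}(\cDi_{\hat g_1})] = [\hat\rho_{(2)}(\cDi_{\hat g_0})]$ in $H^*_{dR}(B)$ follows immediately.
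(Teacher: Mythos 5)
Your outline is pointing in the right direction (realise the concordance on the cylinder bundle, use Lichnerowicz to kill the index, extract the boundary eta forms), but there is a genuine gap at exactly the spot you flag. You propose to run the closed-fibre transgression formula \eqref{transgs} on $M\times I\rightarrow B$ and to read off the $t\to 0$ limit from Proposition~\ref{teoloc} applied to the cylinder family. That is not legitimate: Proposition~\ref{teoloc}, the heat-kernel construction of Section~\ref{sec.inv}, and the transgression identities \eqref{transg}--\eqref{transgs} are all established for fibre bundles whose fibres are closed (or, on the covering, complete without boundary). The cylinder fibres $M_b\times I$ are compact manifolds with boundary, so without an elliptic boundary condition there is no family heat kernel, $\Str_\G e^{-\mathbb{B}_t^2}$ is not defined, and the $t\to 0$ limit does not reduce to the interior $\hat A$-form. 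The boundary ``localisation'' giving the eta-form correction is precisely the content of a families Atiyah--Patodi--Singer theorem, not something you can import from the closed-fibre machinery.

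This is where your route diverges from the paper's. The intended argument does not attempt to rederive that boundary term: it simply applies the Bismut--Cheeger families index theorem for manifolds with boundary \cite{BC} to $\cDi_{M\times I/B,\hat G}$ (where $\Ch(\Ind)=0$ by Lichnerowicz), and the Leichtnam--Piazza $L^2$-version \cite[Theorem 4]{LP} to the covering cylinder family, obtaining
$0=\int_{M\times I/B}\hat A - \tfrac12\hat\eta(\cDi_{\hat g_0})+\tfrac12\hat\eta(\cDi_{\hat g_1})$
and its $L^2$-analogue in $H^*_{dR}(B)$, and subtracts the two to cancel the identical interior terms. Those two theorems already contain the APS product-end, parametrix-gluing, and uniform-in-$b$ control that you correctly identify as the main obstacle; your plan amounts to reproving them from scratch rather than citing them, and as written (using Prop.~\ref{teoloc} on a fibre with boundary) the argument does not close.
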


\begin{proof}
Let $\hat{g}_0$ and $\hat{g}_1$ be concordant, and $\hat{G}$ the PSC vertical metric on the family of cylinders. 
The family of Dirac operators  $\cDi_{M\times I/B, \hat{G}}$ 
has as boundary the two families $\cDi^0=(D_z, \hat{g}_{0,z})_{z\in B}$ and $\cDi^1=(D_z, \hat{g}_{1,z})_{z\in B}$, both invertible.  Then the Bismut--Cheeger theorem in \cite{BC} can be applied 
$$
0=\int_{M\times I/ B}\hat{A}(M\times I/B) -\frac{1}{2}\hat{\eta}(\cDi_{\hat{g}_0})+\frac{1}{2}\hat{\eta}(\cDi_{\hat{g}_1})\;\;\;\;\; \text{in} \;\;H_{dR}^*(B)
$$
where $\Ch(\mathcal{I}nd \,\mathcal{D}_{M\times I,h})=0\in H_{dR}^*(B)$.

On the family of coverings we reason as before and apply the index theorem in \cite[Theorem 4]{LP} to get 
$$
0=\int_{M\times I/B}\hat{A}(M\times I/B)
-\frac{1}{2}\hat{\eta}_{(2)}(\tilde{\cDi} _{\hat{g}_0})+\frac{1}{2}\hat{\eta}_{(2)}(\tilde{\cDi} _{\hat{g}_1})\;\;\;\;\; \text{in} \;\;\; H^*_{dR}(B)
$$
Subtracting we get $[\hat{\rho}_{(2)}(\cDi_{g_0})]=[\hat{\rho}_{(2)}(\cDi_{g_1})]\in H^*_{dR}(B)$.
\end{proof}

\subsection[$\hat{\rho}_{(2)}$ and the action of a fibre bundle diffeomorphism on $\mathcal R^+(M/B)$]{$\hat{\rho}_{(2)}$ and the action of a fibre bundle diffeomorphism on $\mathcal R^+(M/B)$}

Let $(p, \pi)$ be as in Definition \ref{norcov} and assume further that $p$ is the universal covering of $M$. 

If one wants to use $[\hat{\rho}_{(2)}(\cDi)]$ for the study of $\mathcal R^+(M/B)$ it is important to check how this invariant changes when $\hat{g}\in \mathcal R^+(M/B)$ is acted on by a fibre bundle diffeomorphism $f$ preserving the spin structure. 
\texttt{
\begin{proposition}Let $f\colon M\rightarrow M$ be a fibre bundle diffeomorphism preserving the spin structure. Then $[\hat{\rho}_{(2)}(\cDi_{\hat{g}})]= [\hat{\rho}_{(2)}(\cDi_{f^*\hat{g}})] $
\end{proposition}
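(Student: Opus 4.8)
The plan is to push everything through the naturality of the heat-equation invariants under $f$. As is implicit in the setting (for $f$ to act on $\mathcal R^+(M/B)$ by pull-back) I take $f$ to be a bundle automorphism of $\pi$ covering $\mathrm{id}_B$; if $f$ covers a diffeomorphism $\bar f$ of $B$ one only has to replace the forms below by their $\bar f$-pull-backs, which is harmless in $H^*_{dR}(B)$ exactly when $\bar f$ acts trivially on cohomology, in particular in the stated case. Since $f$ preserves the spin structure and is fibrewise an isometry $(M_b,(f^*\hat g)_b)\to(M_b,\hat g_b)$, it lifts to a unitary isomorphism $U_f$ of the vertical spinor bundles with $U_f\,\cDi_{f^*\hat g}\,U_f^{-1}=\cDi_{\hat g}$ fibrewise; transporting along $(f,U_f)$ also the Clifford connection and the horizontal complement identifies the whole geometric family built from $(f^*\hat g, f^*\mathcal V, S_{f^*\hat g})$ with the one built from $(\hat g,\mathcal V, S_{\hat g})$. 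Note $f^*\hat g$ is again a PSC vertical metric, so by Lichnerowicz $\cDi_{f^*\hat g}$ is uniformly invertible and $\hat\eta(\cDi_{f^*\hat g})$ is defined by the estimates of \S\ref{unifinv}; under the identification the Bismut superconnection, its curvature, the heat kernel and the transgression integrand of \eqref{transg}--\eqref{transgs} all correspond, whence $\hat\eta(\cDi_{f^*\hat g}, f^*\mathcal V)=\hat\eta(\cDi_{\hat g},\mathcal V)$ as forms on $B$.

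For the covering family I would use that $p\colon\Mt\to M$ is the universal covering, so $f$ lifts to $\tilde f\colon\Mt\to\Mt$ over $f$, intertwining the deck action up to an automorphism $\phi\in\Aut(\G)$, i.e. $\tilde f\circ\g=\phi(\g)\circ\tilde f$. The same transport by $\tilde f$, together with the unitary $\tilde U_f$ induced by $U_f$, identifies the $\G$-invariant geometric family of $\tilde{\cDi}_{f^*\hat g}$ with that of $\tilde{\cDi}_{\hat g}$, now equivariantly with respect to $\phi$. Since the $L^2$-eta form is assembled from $\trG$ of heat-type operators and $\phi$ induces a trace-preserving isomorphism of the von Neumann algebras of $\G$-invariant operators, the transgression integrands of \eqref{transgs} for $\tilde{\cDi}_{f^*\hat g}$ and $\tilde{\cDi}_{\hat g}$ coincide; in the PSC case the operators are invertible, so the regularity hypotheses of Theorem \ref{th-eta} are vacuous and one only needs the large-time estimates of \S\ref{unifinv}, which are $\Aut(\G)$-natural, giving $\hat\eta_{(2)}(\tilde{\cDi}_{f^*\hat g}, f^*\mathcal V)=\hat\eta_{(2)}(\tilde{\cDi}_{\hat g},\mathcal V)$.

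Subtracting the two equalities yields $\hat\rho_{(2)}(\cDi_{f^*\hat g}, f^*\mathcal V)=\hat\rho_{(2)}(\cDi_{\hat g},\mathcal V)$ as continuous forms on $B$. To conclude I would invoke that in the uniformly invertible situation $\hat\rho_{(2)}$ is closed and its de Rham class is independent of the auxiliary geometric data: changing the horizontal complement (or $g_B$, or the Clifford connection) alters $\hat\eta$ and $\hat\eta_{(2)}$ by transgression terms carrying the \emph{same} local $\hat A$-correction, so their difference changes by an exact form — the same mechanism already used in the proof of Proposition \ref{conc}. Hence
$[\hat\rho_{(2)}(\cDi_{f^*\hat g})]=[\hat\rho_{(2)}(\cDi_{f^*\hat g}, f^*\mathcal V)]=[\hat\rho_{(2)}(\cDi_{\hat g},\mathcal V)]=[\hat\rho_{(2)}(\cDi_{\hat g})]$ in $H^*_{dR}(B)$.

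\emph{Main obstacle.} The conceptually routine but genuinely delicate point is the bookkeeping around the lift $\tilde f$ and the automorphism $\phi\in\Aut(\G)$: one must check that passing from $\G$ to its image under $\phi$ preserves the $\Gamma$-trace $\trG$ and all the operator-space norms $\n{\cdot}_{r,s}$ used in the construction of $e^{-\tilde{\mathbb{B}}_t^2}$ and of $\hat\eta_{(2)}$, i.e. that the whole $L^2$ heat machinery of \S\ref{sec.inv} is $\Aut(\G)$-equivariant. A secondary, more standard point is making precise the independence of the class $[\hat\rho_{(2)}]$ from the geometric choices, which rests on the variation formula for eta forms in the invertible case.
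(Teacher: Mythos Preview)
Your proof is correct and follows essentially the same strategy as the paper: naturality under $f$ gives equality of the eta forms for the pulled-back geometric structures, universality of $\Mt$ handles the covering (the paper phrases this via the identification $f^*\Mt\simeq\Mt$ of coverings rather than via your explicit lift $\tilde f$ and twist $\phi\in\Aut(\G)$, thereby suppressing precisely the point you flag as delicate), and a variation argument shows independence of $[\hat\rho_{(2)}]$ from the horizontal complement. One small correction: the paper does not obtain this last step from the APS-type mechanism of Proposition~\ref{conc} but by a direct transgression on the auxiliary family $\breve M=M\times[0,1]\to B\times[0,1]$ as in Proposition~\ref{intloc}, writing $\eta_0-\eta_1=\int_0^1\breve d\,\breve\eta - d\int_0^1 i_{\partial_t}\breve\eta$ so that the local $\hat A$-contributions cancel in $\hat\rho_{(2)}$ and only an exact form survives.
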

}
\begin{proof}
We follow the proof \cite[Prop. 2.10]{PS2} for the Cheeger--Gromov rho invariant. 
Let $\hat{g}$ be a vertical metric and denote $\mathcal S=P\mathit{Spin}(M/B)$ a fixed spin structure, i.e. a 2-fold covering\footnote{or, equivalently, a 2-fold covering of $PGL_+(T(M/B))$ which is not trivial along the fibres of $PGL_+(T(M/B))\rightarrow M$, \cite[p. 8]{PS2}.} of $PSO_{\hat{g}}(T(M/B))\rightarrow M$. 

The eta form downstairs of $\cDi$ depends in fact on $\hat{g}$, on the spin structure, and on  the horizontal connection $T^HM$, so we write here explicitly $\hat{\eta}(\cDi_{\hat{g}})=\hat{\eta}(\cDi_{\hat{g}, \mathcal S}, T^HM)$.

First of all $\hat{\eta}(\cDi_{\hat{g},  \mathcal S}, T^HM)=\hat{\eta}(\cDi_{f^*\hat{g}, f^*\mathcal S}, f^*T^HM)$, because $f$ induces a unitary equivalence between the superconnections constructed with the two geometric structures.

Because $f$ spin structure preserving, 
it induces an isomorphism $\beta_{GL_+}$ between the original spin structure $ \mathcal S$ and the pulled back one $df^* \mathcal S$. Then $\beta_{GL_+}$ gives a unitary equivalence between the operator obtained via the pulled back structures, and the Dirac operator for $f^*\hat{g}$ and the chosen fixed spin structure, so that $\hat{\eta}(\cDi_{f^*\hat{g}, f^* \mathcal S}, f^*T^HM)=\hat{\eta}(\cDi_{f^*\hat{g},  \mathcal S}, f^*T^HM)$. Taken together
\begin{equation*}
\hat{\eta}(\cDi_{\hat{g},  \mathcal S}, T^HM)=\hat{\eta}(\cDi_{f^*\hat{g},  \mathcal S}, f^*T^HM)
\end{equation*}
Let $p\colon \Mt\rightarrow M$ be the universal covering.
Now we look at $\hat{\eta}_{(2)}(\tilde{\cDi})=\hat{\eta}_{(2)}(\tilde{\cDi}_{\hat{g},\mathcal{S}}, T^HM, p)$, where on $\Mt$  the metric, spin structure and connection are the lift via $p$ as by definition. Again, if we construct the $L^2$ eta form for the entirely pulled back structure, we get
$
\hat{\eta}_{(2)}(\tilde{\cDi}_{\hat{g},\mathcal{S}}, T^HM, p)= \hat{\eta}_{(2)}(\tilde{\cDi}_{f^*\hat{g},f^*\mathcal{S}}, f^*T^HM, f^*p)
$. Proceeding as above on the spin structure,
$\hat{\eta}_{(2)}(\tilde{\cDi}_{f^*\hat{g},f^*\mathcal{S}}, f^*T^HM, f^*p)=\hat{\eta}_{(2)}(\tilde{\cDi}_{f^*\hat{g},\mathcal{S}}, f^*T^HM, f^*p)$. 
Since $\Mt$ is the universal covering we have a covering isomorphism between $f^*\Mt$ and $\Mt$, which becomes an isometry when $\Mt$ is endowed of the lift of the pulled back metric $f^*\hat{g}$, therefore 
$$\hat{\eta}_{(2)}(\tilde{\cDi}_{f^*\hat{g},\mathcal{S}}, f^*T^HM, f^*p)=\hat{\eta}_{(2)}(\tilde{\cDi}_{f^*\hat{g},\mathcal{S}}, f^*T^HM, p)$$
It remains to observe how $\hat{\eta}$ and $\hat{\eta}_{(2)}$ depends on the connection $T^HM$. We remove for the moment the hat $\hat{\,}$ to simplify the notation.
Let $T^H_0M,T^H_1M$ two connections, say given by $\o_0,\o_1\in \O^1(M, T(M/B))$ and pose $\o_t=(1-t)\o_0+t\o_1$. Construct
the family $\breve{M}=M\times [0,1]\stackrel{\breve{\pi}}{\rightarrow} B\times [0,1]=:\breve{B}$ as in the proof of Prop. \ref{intloc}. On this fibre bundle put the connection one form $\breve \o+dt$. Since $\breve d \breve {\eta}=d\breve{\eta}(\cdot , t)-\frac{\partial}{\partial t} \eta(t)dt$ we have
$$
\eta_0-\eta_1=\int_0^1 \breve d \breve{\eta}-\int_0^1 d i_{\frac{\partial}{\partial t}}\breve{\eta}=\int_0^1\int_{\breve{M}/\breve{B}}\hat{A}(M\times I/B\times I)- d\,\int_0^1 i_{\frac{\partial}{\partial t}}\breve{\eta}
$$
which is the sum of a local contribution plus an exact form. 
Writing the same for $\eta_{(2)}$ we get that for the  $L^2$-rho form
$\hat{\rho}_{(2)}(\cDi,T^H_0M)=\hat{\rho}_{(2)}(\cDi,T^H_1M)\in \O(B)/d\O(B)$ and therefore we get the result.
\end{proof}

\subsection{Conjectures}
Along the lines of \cite{Ke,PS1} we can state the following conjectures.
\begin{conjecture}
If $\G$ is torsion-free and satisfies the Baum-Connes conjecture for the maximal $C^*$-algebra, then $[\hat{\rho}_{(2)}(\cDi_{\hat{g}})]$ vanishes if $\hat{g}\in \mathcal{R}^+(M/B)$.
\end{conjecture}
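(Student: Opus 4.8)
The plan is to transport to the families / differential-form level the argument used by Keswani \cite{Ke} and Piazza--Schick \cite{PS1} for the numerical Cheeger--Gromov $L^2$-rho invariant (the case $B=\{\mathrm{pt}\}$). First I would note that the hypothesis $\hat g\in\mathcal R^+(M/B)$ already places us in the benign analytic regime of Section \ref{unifinv}: by the Lichnerowicz formula the families $\cDi_{\hat g}$ and $\tilde{\cDi}_{\hat g}$ are uniformly invertible (cf.\ \eqref{inv}), so the eta forms $\hat\eta(\cDi_{\hat g})$ and $\hat\eta_{(2)}(\tilde{\cDi}_{\hat g})$ exist by the classical large-time estimates, the form $\hat\rho_{(2)}(\cDi_{\hat g})$ is genuinely \emph{closed}, and $[\hat\rho_{(2)}(\cDi_{\hat g})]\in H_{dR}^*(B)$ is defined with no Novikov--Shubin input. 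It remains to show that this de Rham class vanishes, and for this I would pass, as in \cite{Ke,PS1}, from the heat-kernel description to a $K$-theoretic one and invoke Baum--Connes.

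Concretely, the steps would be the following. \emph{(i)} Build, using the \emph{maximal} completions throughout, a families analogue of the Higson--Roe analytic surgery sequence
\[
\cdots\longrightarrow K_*\bigl(C^*(\Mt/B)^\G\bigr)\longrightarrow K_*\bigl(D^*(\Mt/B)^\G\bigr)\longrightarrow K_*^\G(\Mt/B)\xrightarrow{\,\mu\,}K_{*-1}\bigl(C^*(\Mt/B)^\G\bigr)\longrightarrow\cdots
\]
for suitable Roe-type $C(B)$-algebras of $\G$-equivariant finite-propagation operators along the fibres of $\Mt\to B$ (here $\G$ torsion-free guarantees that $K_*^\G$ is ordinary equivariant $K$-homology, which assembles to $K_*(C^*_{\max}\G)$-coefficients). \emph{(ii)} Attach to $\tilde{\cDi}_{\hat g}$ its $\rho$-class: since uniform invertibility makes the $B$-parametrised higher index of $\tilde{\cDi}_{\hat g}$ vanish, the fundamental class of the family lifts along $K_*\bigl(D^*(\Mt/B)^\G\bigr)\to K_*^\G(\Mt/B)$, and the canonical lift given by $\tilde{\cDi}_{\hat g}\,|\tilde{\cDi}_{\hat g}|^{-1}$ is $\rho(\tilde{\cDi}_{\hat g})\in K_*\bigl(D^*(\Mt/B)^\G\bigr)$; this is where positive scalar curvature enters, \emph{only} through the invertibility. \emph{(iii)} Use that the Baum--Connes conjecture for $C^*_{\max}\G$, together with compactness of $B$, makes the assembly map $\mu$ above an isomorphism; by exactness of the surgery sequence this forces $K_*\bigl(D^*(\Mt/B)^\G\bigr)=0$, hence $\rho(\tilde{\cDi}_{\hat g})=0$. \emph{(iv)} Prove a higher / families \emph{delocalized Atiyah--Patodi--Singer} formula identifying the superconnection--heat-kernel form $\hat\rho_{(2)}(\cDi_{\hat g})$ with the image of $\rho(\tilde{\cDi}_{\hat g})$ under the Chern character composed with the difference $\trG-\tr_1$ of the $\G$-trace (from the regular representation) and the augmentation trace (from the trivial representation $\G\to\{1\}$) --- both of which are defined on the \emph{maximal} algebra, whence the need for the maximal version of Baum--Connes. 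Since $\rho(\tilde{\cDi}_{\hat g})=0$, this gives $[\hat\rho_{(2)}(\cDi_{\hat g})]=0$ in $H_{dR}^*(B)$.

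The main obstacle is step \emph{(i)} together with step \emph{(iv)}: none of the surgery-sequence / higher-$\rho$-class / delocalized-APS apparatus exists yet in the ``normal covering of a fibre bundle'' framework of this paper, and setting it up means reconciling the superconnection heat-kernel and Novikov--Shubin analysis of Sections 2--3 with the $C^*$-algebraic picture --- a higher analogue of the already subtle identification of the Atiyah--Patodi--Singer $\eta$-invariant with the Higson--Roe $\rho$-class. A secondary difficulty is the maximal-versus-reduced bookkeeping: the trace $\trG$ factors through $C^*_r\G$ whereas $\tr_1$ and surjectivity of assembly live on $C^*_{\max}\G$, so in step \emph{(iv)} one must carry out the comparison on the maximal algebra and verify that $\hat\rho_{(2)}(\cDi_{\hat g})$ depends only on the $K$-theory class there. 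By contrast the purely analytic ingredient --- well-definedness and closedness of $\hat\rho_{(2)}(\cDi_{\hat g})$ --- is unproblematic here, handed to us by uniform invertibility, so the entire weight of the conjecture is index-theoretic and $C^*$-algebraic rather than analytic.
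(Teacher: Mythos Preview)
The statement you are addressing is listed in the paper as a \emph{conjecture}, not a theorem: it appears in the final subsection under the heading ``Conjectures'', introduced by ``Along the lines of \cite{Ke,PS1} we can state the following conjectures'', and the paper offers no proof. There is therefore nothing in the paper to compare your proposal against.

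What you have written is a research programme rather than a proof, and you acknowledge this yourself: the families Higson--Roe surgery sequence over $B$ in step \emph{(i)} and the higher delocalized APS identification in step \emph{(iv)} do not exist in the literature available to the paper, and constructing them is substantial independent work well beyond anything developed here. The outline is the natural extrapolation of the $B=\{\mathrm{pt}\}$ arguments in \cite{Ke,PS1}, which is exactly why the author cites those references when posing the conjecture; but an outline of what one would \emph{like} to prove is not a proof. One additional caution on step \emph{(iii)}: even granting step \emph{(i)}, passing from the Baum--Connes isomorphism for $\G$ to bijectivity of the \emph{$B$-parametrised} assembly map you write down is itself a nontrivial step --- compactness of $B$ does not hand it to you for free, and one would expect to need a Mayer--Vietoris or spectral-sequence comparison over a finite cover of $B$ --- so this should be flagged as a genuine ingredient rather than an automatic consequence.
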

\begin{definition} Let $\pi\colon M\rightarrow B$ and $\theta\colon N\rightarrow B$ be two smooth fibre bundles of compact manifolds over the same base $B$. A continuos map $h\colon N\rightarrow M$ is called a \emph{fibrewise homotopy equivalence} if  $\pi\circ h=\theta$, and there exists $g\colon N\rightarrow M$ such that $\theta\circ g=\pi$ and such that $h\circ g$, $g\circ h$ are homotopic to the identity by homotopies that take each fibre into itself.
\end{definition}

We work in the following with smooth fibrewise homotopy equivalences.

\begin{definition}Let $\G$ be a discrete group and $(\pi\colon M\rightarrow M, p\colon \Mt\rightarrow M)$, $(\theta\colon N\rightarrow B, q\colon \Nt\rightarrow N)$ be two  normal $\G$-coverings of the fibre bundles $\pi$ and $\theta$. Denote as $r\colon M\rightarrow B\G$, $s\colon N\rightarrow B\G$ the two classifying maps. 
We say $(\pi,p)$ and $(\theta, q)$ are \emph{$\G$-fibrewise  homotopy equivalent} if there exists  a fibrewise homotopy equivalence $h\colon N\rightarrow M$ such that $s\circ h$ is homotopic to $r$.
\end{definition}
Let $\mathcal{D}^{sign}$ denote the family of signature operators.
\begin{conjecture}
Assume $\G$ is a torsion-free group that satisfies the Baum-Connes conjecture for the maximal $C^*$-algebra.
Let $h$ be a orientation preserving $\G$-fibrewise  homotopy equivalence between $(\pi,p)$ and $(\theta, q)$ and suppose $\mathcal{\tilde{D}}^{sign}_{M/B}$ and $\mathcal{\tilde{D}}^{sign}_{N/B}$ have smooth spectral projections and Novikov--Shubin invariants $>3(\dim B+1)$. 

Then $[\hat{\rho}_{(2)}(\mathcal{\tilde{D}}^{sign}_{M/B})]=[\hat{\rho}_{(2)}(\mathcal{\tilde{D}}^{sign}_{N/B})]\in H^*_{dR}(B)$.
\end{conjecture}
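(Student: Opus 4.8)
We outline the line of argument we expect to establish this, following the strategy of \cite{Ke,PS1} for the Cheeger--Gromov $\rho$-invariant and adapting it to the present families situation; carrying it out in full amounts to re-running the analysis of Section~\ref{sec.wea} along a path of perturbed operators, which is where the substantive work lies. Recall that $\hat{\rho}_{(2)}(\mathcal{\tilde{D}}^{sign}_{M/B})=\hat{\eta}_{(2)}(\mathcal{\tilde{D}}^{sign}_{M/B})-\hat{\eta}(\mathcal{D}^{sign}_{M/B})$, and likewise for $N$. Morally, both summands arise by applying, respectively, the von Neumann trace $\tau_\G$ on the covering and the evaluation trace $\tau_e$ at the unit of $C^*_{max}\G$, to one and the same eta form of the signature family twisted by the Mishchenko flat bundle. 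So it suffices to compare the $C^*_{max}\G$-eta forms of $M/B$ and $N/B$ modulo $d\,\O(B)$, and then apply $\tau_\G-\tau_e$.

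A smooth orientation preserving $\G$-fibrewise homotopy equivalence $h\colon N\to M$ with $s\circ h\simeq r$ produces, by the Hilsum--Skandalis construction performed fibrewise and $\G$-equivariantly, a family of $\G$-invariant smoothing operators $A=(A_z)_{z\in B}$ and a path $L_z(u)$, $u\in[0,1]$, of fibrewise self-adjoint operators of Dirac type modulo smoothing, joining $\mathcal{\tilde{D}}^{sign}_z(M/B)\oplus(-\mathcal{\tilde{D}}^{sign}_z(N/B))$ at $u=0$ to a \emph{uniformly invertible} operator at $u=1$; the analogous construction downstairs joins $\mathcal{D}^{sign}_z(M/B)\oplus(-\mathcal{D}^{sign}_z(N/B))$ to an invertible operator, compatibly with $L_z(u)$ under $p$ and $q$. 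Assembling $L(u)$ into a two-parameter geometric family over $B\times[0,1]$, one then writes the transgression formula \eqref{transgs} for it.

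The \textbf{main obstacle} is to show that the hypotheses of Theorem~\ref{th-eta} --- regular spectral projections and family Novikov--Shubin invariants $>3(\dim B+1)$ --- persist along $L(u)$ uniformly in $u$, so that an $L^2$-eta form is defined for each $u$ and the estimates of Section~\ref{sec.wea} go through with the extra $u$-parameter; this is delicate precisely because $L(u)$ is not a path of genuine Dirac operators and its low-lying spectrum must be controlled (it is here that torsion-freeness and Baum--Connes are used, to preclude a spectral jump at $u=0$). Granting this, integrating \eqref{transgs} over $B\times[0,1]$ and letting $t\to0$ (Propositions~\ref{teoloc} and \ref{intloc}) shows that $\hat{\eta}_{(2)}(\mathcal{\tilde{D}}^{sign}_{M/B})-\hat{\eta}_{(2)}(\mathcal{\tilde{D}}^{sign}_{N/B})$ equals, modulo $d\,\O(B)$, a local term $\int_{\breve M/\breve B}\hat{A}(\breve M/\breve B)\,\ch(E/S)$ plus $\Ch$ of the index class in $H^*_{dR}(B)$ of the invertible endpoint family. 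The same computation downstairs --- equivalently, the families homotopy invariance of the signature eta form, \cite{BC,LP} --- yields the \emph{same} local term for $\hat{\eta}(\mathcal{D}^{sign}_{M/B})-\hat{\eta}(\mathcal{D}^{sign}_{N/B})$, since that term is built from locally isometric data and so is unaffected by passing to the covering. Subtracting, the local contributions cancel and $[\hat{\rho}_{(2)}(\mathcal{\tilde{D}}^{sign}_{M/B})]-[\hat{\rho}_{(2)}(\mathcal{\tilde{D}}^{sign}_{N/B})]$ becomes the image under $\tau_\G-\tau_e$ of $\Ch$ of a families $C^*_{max}\G$-index class.

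Finally, because all the geometric data --- the two coverings, the homotopy $h$, and the classifying maps $r,s$ --- fit into a compatible map to $B\G\times B$, this residual index class lies in the image of the families Baum--Connes assembly map. When $\G$ is torsion free and satisfies Baum--Connes for $C^*_{max}\G$, the delocalized trace $\tau_\G-\tau_e$ vanishes on the Chern character of assembled classes --- this is exactly the input that makes $\rho_{(2)}$ a homotopy invariant in \cite{Ke,PS1} --- so the residual class is zero in $H^*_{dR}(B)$, and $[\hat{\rho}_{(2)}(\mathcal{\tilde{D}}^{sign}_{M/B})]=[\hat{\rho}_{(2)}(\mathcal{\tilde{D}}^{sign}_{N/B})]$, as claimed.
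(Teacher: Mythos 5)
What you are attempting to prove is stated in the paper explicitly as a \emph{conjecture} --- it appears in the final subsection, titled ``Conjectures,'' with no proof offered, and the introduction only says that along the lines of \cite{Ke,PS1} one ``can expect'' such a homotopy-invariance result. There is therefore no paper proof to compare against. Your sketch does follow the strategy the author evidently has in mind: a fibrewise, $\G$-equivariant Hilsum--Skandalis perturbation joining $\mathcal{\tilde{D}}^{sign}_{M/B}\oplus(-\mathcal{\tilde{D}}^{sign}_{N/B})$ to a uniformly invertible family, transgression over $B\times[0,1]$, cancellation of local terms between upstairs and downstairs, and identification of the residual with a delocalized Chern character of an assembled index class. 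But you have not closed the gap you yourself flag as the ``main obstacle'': showing that the regular-spectral-projection and Novikov--Shubin hypotheses of Theorem~\ref{th-eta} persist along the path $L(u)$, uniformly in $u$, so that the $L^2$-eta form is defined and the estimates of Section~\ref{sec.wea} hold at each $u$ and jointly in $(t,u)$. That unresolved step is precisely why the statement is a conjecture rather than a theorem in the paper; writing ``granting this'' does not dispose of it.

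There is a second, independent gap in your final step. You assert that the leftover term is the image under $\tau_\G - \tau_e$ of the Chern character of a class in the range of a \emph{families} Baum--Connes assembly map for $C^*_{\max}\G$, and that this pairing vanishes for $\G$ torsion-free satisfying Baum--Connes. In the $\dim B = 0$ case of \cite{Ke,PS1} this is the known delocalized-vanishing argument, but in the families setting one would first have to set up a $C^*_{\max}\G$-linear families index theory compatible with the von Neumann heat-kernel framework used here (along the lines of \cite{LP,lp}, with the Mishchenko-twisted signature family), identify the transgression residue as a genuine assembled class in $K_*(C^*_{\max}\G)\otimes H^*_{dR}(B)$, and prove that $\tau_\G-\tau_e$ kills its higher Chern character in $H^*_{dR}(B)$ rather than only its degree-zero part. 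None of this is carried out in the paper, and your proposal does not supply it either. Until both points are established, this remains a programme for a proof, not a proof, which is consistent with the paper leaving it as a conjecture.
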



\appendix

\section{Analysis on normal coverings}
\label{app1}
We summarize the analytic tools we use to investigate $L^2$ spectral invariants, namely $\mathcal{N}\G$-Hilbert spaces and Sobolev spaces on manifolds of bounded geometry, following the nice exposition in \cite{Va}.

\subsection{$\mathcal{N}\G$-Hilbert spaces and von Neumann dimension} 
Let $\G$ be a discrete countable group and $l^2(\G)$ the Hilbert space of complex valued, square integrable functions on $\G$. Denote with $\de_\g\in \C\G$ the function with value $1$ on $\g$, and zero elsewhere. The convolution law on $\C\G$ is $\de_\gamma*\de_\beta=\de_{\gamma\b}$.
Let $L$ be the action of $\G$ on $l^2(\G)$ by left convolution $L\colon \G\rightarrow \mathcal{U}(l^2(\G))$, $L_\g(f)=(\de_\g*f )(x)=f(\g^{-1}x)$. 
Right convolution action is denoted by $R$.

\begin{definition}
The \emph{group von Neumann algebra} $\mathcal{N}\G$ is defined to be the weak closure $\mathcal{N}\G:=L\overline{(\mathbb{C}\G)}^{weak}$ in $ \mathcal{B}(l^2(\G))$.
By the double commutant theorem
$\mathcal{N}\G =R(\mathbb{C}\G)'$, so that $\mathcal{N}\G$ is the algebra of operators commuting with the right action of $\G$.
An important feature of the group von Neumann algebra is its \emph{standard trace} $ 
\trG\colon \mathcal{N}\G\longrightarrow \mathbb{C}$ defined as $\trG A= <A\de_e,\de_e>_{l^2(\G)}$. In particular for $A=\sum a_\g L_\g\in \mathcal{N}\G$, then $\trG(A)=a_e$.\end{definition}

\begin{definition}\label{free}
A \emph{free $\mathcal{N}\G$-Hilbert space} is a Hilbert space of the form $W\otimes l^2(\G)$, where $W$ is a Hilbert space and $\G$ acts on $l^2(\G)$ on the right.\\
A \emph{$\mathcal{N}\G$-Hilbert space} $\mathcal{H}$ is a Hilbert space with a unitary right-action of $\G$ such that there exists a $\G$-equivariant immersion
$
H\rightarrow \mathcal{V}\otimes l^2(\G)
$
in some free $\mathcal{N}\G$-Hilbert space.
For $\mathcal{H}_1,\mathcal{H}_2$ $\mathcal{N}\G$-Hilbert spaces, define
$\mathcal{B}_\G(\mathcal{H}_1\mathcal{H}_2)\colon =\{T\colon \mathcal{H}_1\rightarrow \mathcal{H}_2\;\;\text{bounded}\; \text{and } \;\G\text{-equivariant}\}$.
\end{definition}
Let $\mathcal{H}=\mathcal{V}\otimes l^2(\G)$ be a free $\mathcal{N}\G$-Hilbert space. Then $\mathcal{B}_\G(\mathcal{V}\otimes l^2(\G))\simeq \mathcal{B}(\mathcal{V})\otimes \mathcal{N}\G$.
There exist a trace on the positive elements of this von Neumann algebra, with values in $[0,\infty]$: let $(\psi_j)_{j\in \mathbb{N}}$ is a orthonormal base of $\mathcal{V}$;
 if $f\in \mathcal{B}(\mathcal{H})_+$, its trace is given by $
\trG(f)=\sum_{j\in \mathbb{N}}<f(\psi_j\otimes \de_e),\psi_j\otimes \de_e>$. A $\G$-trace can be defined also on any  $\mathcal{N}\G$-Hilbert-space $H$ using the immersion $ j\colon H\hookrightarrow \mathcal{V}\otimes  l^2\G$ and proveing that the trace does not depend on the choice of $j$ (see \cite{CG} or \cite[pag. 17]{Lu}).

\begin{definition}Let $\mathcal{H}$ be a $\mathcal{N}\G$-Hilbert space. Its \emph{von Neumann dimension} is defined as
$\dim{}_{\G}(\mathcal{H})=\trG (\id\colon \mathcal{H}\rightarrow \mathcal{H})\;\;\in\;[0,+\infty) $.
\end{definition}

\begin{definition} Let $\mathcal{H}_1$ and $\mathcal{H}_2$ be $\mathcal{N}\G$-Hilbert spaces. Define 
\begin{itemize}
  \item $
\mathcal{B}^f_\G(\mathcal{H}_1,\mathcal{H}_2):=\{A\in \mathcal{B}_\G( \mathcal{H}_1,\mathcal{H}_2)' | \dim{}_\G \left(\overline{\Im A}\right)<\infty\}$ are the \emph{$\G$-finite rank operators} 
  \item $
\mathcal{B}^\infty_\G(\mathcal{H}_1,\mathcal{H}_2):=\overline{\mathcal{B}^f_\G(\mathcal{H}_1,\mathcal{H}_2)}^{\n{\;\;}}$, are the \emph{$\G$-compact operators} 
  \item $
\mathcal{B}^2_\G(\mathcal{H}):=\{A\in \mathcal{B}_\G( \mathcal{H}) s.t. \trG \left(AA^*\right)<\infty\}$, are the \emph{$\G$-Hilbert-Schmidt operators} 
  \item $\mathcal{B}^1_\G(\mathcal{H}):=\mathcal{B}^2_\G(\mathcal{H})\mathcal{B}^2_\G(\mathcal{H})^*$ the \emph{$\G$-trace class} operators.
\end{itemize}
\end{definition}
Their main properties are:\begin{itemize}
\item[1)] $ \mathcal{B}^f(\mathcal{H}),  \mathcal{B}^\infty(\mathcal{H}),  \mathcal{B}^2(\mathcal{H}),  \mathcal{B}^1(\mathcal{H})$ are ideals and $\mathcal{B}^f\subset \mathcal{B}^1\subset \mathcal{B}^2\subset \mathcal{B}^\infty$;
\item[2)] $A\in \mathcal{B}^i(\mathcal{H})$ if and only if $|A|\in \mathcal{B}^i(\mathcal{H})$ for $i=1,2,f,\infty$.
\end{itemize}

\subsection{Covering spaces, bounded geometry techniques}
Let $p\colon \Zt\rightarrow Z$ a normal $\G$-covering of a compact Riemannian manifold $Z$. Let  $\mathcal{I}\subset \Zt $ be a fundamental domain for the (right) action of $\G$ on $\Zt $ ($\mathcal{I}$ is an open subset s.t. $\ds\mathcal{I}\cdot\g\cap \mathcal{I}$ and $\Zt\setminus \bigcup  \mathcal{I} \cdot\g$ have zero measure $\forall \g\neq e$).

Let $E\rightarrow Z$ a Hermitian vector bundle, and $\Et=p^* E$ the pull-back. The sections $\mathcal{C}^\infty_c(\Zt,\Et)$ form a $\C\G$-right module for the action 
$\displaystyle 
(\xi\cdot f) (\tilde{m})=\sum_{g\in \G}(R^*_g\xi)(\tilde{m}) f(g^{-1})$
where $(R^*_g\xi)(\tilde{m}):=\xi(\tilde{m}g)$.
Its Hilbert space completion $L^2(\Zt,\Et)$ is a $\G$-free Hilbert space in the sense of definition \ref{free}, in fact the map
$\psi\colon L^2(\Zt,\Et)\longrightarrow L^2(\mathcal{I},\Et_{|\mathcal{I}})\otimes l^2(\G)$, $\xi \mapsto \sum_{\g\in\G} \left(R_\g^*\xi\right)_{|\mathcal{I}}\otimes \delta_{\g}$
is an isomorphism.

The $\G$-trace class operators are characterized as follows: let $A\in \mathcal{B}_\G(L^2(\Zt,\Et))$
\begin{equation*}
  A\in \mathcal{B}^1_\G(L^2(\Zt,\Et))\text{ if and only if }\chi_{\mathcal{I}}|A|\chi_{\mathcal{I}}\in \mathcal{B}^1(L^2(\mathcal{I},E_{|\mathcal{I}}))
  \end{equation*}
If $A\in \mathcal{B}^1_\G(L^2(\Zt,\Et))$ then $ \tr_\G(A)=\tr(\chi_{\mathcal{I}}A \chi_{\mathcal{I}})$. If $A\in \mathcal{B}^1_\G(L^2(\Zt,\Et))$ has
 Schwartz kernel $[A]$ continuos, then \begin{equation}
\label{trace}
\tr A=\int_{\mathcal{I}}\tr{}_{\Et_x}\left([A](x,x)\right)dx=\int_M \pi_*\tr{}_{\Et_x}\left([A](x,x)\right)dx\,.
\end{equation}
The covering of a compact manifold and the pulled back bundle $\tilde{E}$ above are the most simple examples of manifolds of  \emph{bounded geometry}\footnote{Let $(N,g)$ be a Riemannian manifold.
$N$ is \textit{of bounded geometry} if 
\begin{enumerate}
\item it has positive injectivity radius $i(N,g)$;
\item the curvature $R_N$ and all its covariant derivatives are bounded.
\end{enumerate}
A hermitian vector bundle $E\rightarrow N$ is \textit{of bounded geometry} if the curvature $R^E$ and all its covariant derivatives are bounded. This can be characterized in normal coordinates with conditions on $g$, coordinate transformations and $\nabla$ (see for example in \cite{Ro} and \cite{Va}).
}.

The analysis on manifolds of bounded geometry was developped in \cite{Ro}. We specialize here to the case of a normal covering $\tilde{Z}$. 

The Sobolev spaces of sections are defined, for $k\geq 0$, as the completion $H^k(\Zt,\Et):=\overline{C_c^\infty(\Zt,\Et)}^{\n{\;}_k}$ where $\n{f}_k:=\sum_{j=0}^k\n{\nabla^j f}_{L^2(\Zt,\Et \otimes ^j T^* \Zt)}$; for $k<0$ $H^k(\Zt,\Et)$ is defined as the dual of $H^{-k}(\Zt,\Et)$.
%

The spaces of \emph{uniform $\mathcal{C}^k$ sections} are defined as follows: $U\mathcal{C}^k(\Mt)=\{f\colon \Mt\rightarrow \mathbb{C}\; |\, f\in \mathcal{C}^k\; \text{and} \; \n{f}_k\leq c(k)\; \forall k\}  $, where $\n{f}_k=\sup_{\tilde{m}\in \Mt, X_i} \{|\nabla_{X_1}\dots \nabla_{X_k}f (\tilde{m})|\}$, and analogously for sections $U\mathcal{C}^k(\Mt,\Et)$. 
$U\mathcal{C}^\infty(\Mt,\Et)$ is the Fr\'echet space $:=\bigcap_k U\mathcal{C}^k(\Mt,\Et)$.

The following \emph{Sobolev embedding} property holds \cite{Ro}: if $\dim \Mt=n$, then for $j>\frac{n}{2}+k$ there is a continuos inclusion $H^j(\Mt,\Et)\hookrightarrow U\mathcal{C}^k(\Mt,\Et)$.

The algebra $\UDiff (\Mt,\Et)$ of \emph{uniform differential operators} is the algebra generated by operators in $U\mathcal{C}^\infty(\Mt,\End \Et)$ and derivatives $\{\nabla^{\Et}_X\}_{X\in U\mathcal{C}^\infty(\Mt,T\Mt)}$ with respect to uniform vector fields. 
$P\in \UDiff (\Mt,\Et)$  extends to a continuos operator $H^j(\Mt,\Et)\rightarrow H^{j-k}(\Mt,\Et)$  $\forall j\in \mathbb{Z}$.

$P\in \UDiff (\Mt,\Et)$ is called \emph{uniformly elliptic} if its principal symbol $\s_{pr}\in U\mathcal{C}^\infty(T^*\Mt,\pi^*\End \Et)$ is invertible out of an $\ep$-neighborhood of $0\in T^*\Mt$, with inverse section which can be uniformly estimated.

For a uniformly elliptic operator $T$ the \emph{G\aa rding inequality} holds:\begin{equation}
\label{Gaa}
\n{\phi}_{H^{s+k}(\Mt,\Et)}\leq c(s,k) \left( \n{\phi}_{H^s}+\n{T\phi}_{H^s}\right)\;\;\; \forall s\in \mathbb{R}
\end{equation}
If $T$ is a continuos operator $T\colon \mathcal{C}^\infty _c(N,E)\rightarrow (\mathcal{C}^\infty _c(\Mt,\Et))'$ we will denote its \emph{Schwartz kernel} with $[T]\in \mathcal{C}^\infty(\Mt\times \Mt, \Et\XBox \Et^*)$. 
\begin{definition}
We say that $T\colon \mathcal{C}^\infty _c(N,E)\rightarrow (\mathcal{C}^\infty _c(\Mt,\Et))'$ \emph{has order} $k\in \mathbb{Z}$ if $\forall s\in \mathbb{Z}$ it admits a bounded extension
$H^s(\Mt,\Et)\rightarrow H^{s-k}(\Mt,\Et)$. Hence it is closable as unbounded operator on $L^2(\Mt,\Et)$. 

The space of order $k$ operators is denoted $\Op^k(\Mt,\Et)$, and comes with the seminorms on $\mathcal{B}(H^s(\Mt,\Et), H^{s-k}(\Mt,\Et))$. The space $ \Op^{-\infty}(\Mt,\Et)=\bigcap_{k}  \Op^k(\Mt,\Et)$ is a Fr\'echet space. 

Finally, an operator $T\in \Op^k(\Mt,\Et)$ is called \emph{elliptic} if it satisfies G\aa rding inequality. We will denote as $\Op^k_\G(\Mt,\Et)$ the subspace of \emph{$\G$-invariant operators} in $\Op^k(\Mt,\Et)$.
\end{definition}

Consider the Fr\'echet space of continuos rapidly decreasing functions
$$
RB(\mathbb{R})=\{f\colon  \mathbb{R}\rightarrow \mathbb{C}\; :\;f \text{ continuos, and } \left|(1+x^2)^{\frac{k}{2}}f(x) \right|< \infty \;\;\forall k\}
$$
Let $T\in \Op^k(\Mt,\Et)\,, k\geq 1$ an elliptic, formally self-adjoint operator. Denote again by $T$ its closure, with domain $\dom T=H^k(\Mt,\Et)$.
From G\aa rding inequality \eqref{Gaa} the map
$RC(\mathbb{R})\longrightarrow \mathcal{B}(H^j(\Mt,\Et),H^l(\Mt,\Et))$, $ f \mapsto f(T)
$
is continuos $\forall j, l\in \mathbb{Z}$, so that
\begin{equation*}
RC(\mathbb{R})\longrightarrow \Op{}^{-\infty}(\Zt, \Et) \; , f\mapsto f(T)
\end{equation*} 
is continuos. One can prove that the Schwartz kernel of such operator is smooth: by \eqref{Gaa} and Sobolev embedding, for $L=[\frac{n}{2}+1]$ the map 
\begin{equation}
\label{C0sob}
\Op{}^{-2L-l}(\Zt,\Et)\longrightarrow UC^l(\Zt\times \Mt,E\XBox E^*)\,,\,T \mapsto [T]\end{equation} 
is continuos  $\forall l\in \mathbb{N}$; then
in particular for
 $f\in RC(\mathbb{R})$, the kernel $[f(T)]\in U\mathcal{C}^\infty (\Zt\times \Zt,\Et\XBox \Et^*)$ and the map $ RB(\mathbb{R})\longrightarrow U\mathcal{C}^\infty (\Zt\times \Zt,\Et\XBox \Et^*)$ \,,\,$ f\mapsto [f(T)]$ is continuos.

\begin{lemma}\label{appker}
Since elements in $\Op_\G^{-\infty}(\Mt,\Et)$ are $\G$-trace class, then
 for $T\in \Op{}_\G^k(\Mt,\Et)$ elliptic and selfadjoint, the map 
$ RC(\mathbb{R})\rightarrow \mathcal{B}^1_\G(\Mt,\Et)$ ,\,$ f\mapsto f(T)$ is continuos. As a consequence
 $\forall m$ $\exists l$ such that 
 \begin{equation}
\label{ }
 |\trG f(T)|_m\leq C\n{f(T)}_{-l,l}\,.
\end{equation}
\end{lemma}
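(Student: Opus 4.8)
The plan is to assemble the statement from the bounded–geometry machinery already developed above, the only genuinely new input being the passage to the $\G$-trace. First I would note that for $T\in\Op_\G^k(\Mt,\Et)$ elliptic and formally self-adjoint and $f\in RC(\mathbb{R})$, the operator $f(T)$ is again $\G$-invariant, since it commutes with the $\G$-action by the spectral theorem, and it lies in $\Op^{-\infty}(\Mt,\Et)$ by the continuity of $RC(\mathbb{R})\to\Op^{-\infty}(\Mt,\Et)$, $f\mapsto f(T)$, recalled just before the lemma; in particular each seminorm $\n{f(T)}_{r,s}$ is dominated by some seminorm of $f$ in $RC(\mathbb{R})$. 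Combining this with the hypothesis of the lemma, namely that elements of $\Op_\G^{-\infty}(\Mt,\Et)$ are $\G$-trace class, we already get $f(T)\in\mathcal{B}^1_\G(L^2(\Mt,\Et))$. If one wants to see the $\G$-trace class membership directly rather than quote it, I would write $f=f_1+if_2$ with $f_j$ real, reduce to $f$ real, and use $|f(T)|=|f|(T)=g(T)^2$ with $g=|f|^{1/2}\in RC(\mathbb{R})$: since $g(T)$ has continuous kernel and $\mathcal{I}$ is relatively compact, $\trG\big(g(T)g(T)^*\big)=\int_{\mathcal{I}}\tr_{\Et_x}[|f|(T)](x,x)\,dx<\infty$, so $g(T)\in\mathcal{B}^2_\G$ and hence $|f(T)|\in\mathcal{B}^2_\G\,(\mathcal{B}^2_\G)^{*}=\mathcal{B}^1_\G$; by the property $A\in\mathcal{B}^1\iff|A|\in\mathcal{B}^1$ this gives $f(T)\in\mathcal{B}^1_\G$, and the continuity of $f\mapsto f(T)$ into $\Op^{-\infty}_\G$ carries over.

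For the estimate I would apply the trace formula \eqref{trace}: for $A=f(T)$, which has continuous (indeed smooth) kernel, $\trG A=\int_{\mathcal{I}}\tr_{\Et_x}\big([A](x,x)\big)\,dx$, hence
$$
|\trG f(T)|\;\le\;\vol(\mathcal{I})\;\sup_{x\in\mathcal{I}}\big|[f(T)](x,x)\big|\;\le\;\vol(\mathcal{I})\;\n{[f(T)]}_{UC^0}.
$$
By \eqref{C0sob} with $l=0$ and $L=[\tfrac n2+1]$, the map $\Op^{-2L}(\Mt,\Et)\to UC^0(\Mt\times\Mt)$, $S\mapsto[S]$, is continuous, so $\n{[f(T)]}_{UC^0}\le C\,\n{f(T)}_{-L,L}$, giving $|\trG f(T)|\le C'\n{f(T)}_{-l,l}$ with $l=L$. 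The same inequality, read off the kernel before restricting to the diagonal, yields the pointwise bound $|[f(T)](x,y)|\le c_l\,\n{f(T)}_{-l,l}$ used in the main text (Remark following Theorem \ref{th-eta}). For the $C^m$-version — relevant when $T$, and hence everything, depends smoothly on an auxiliary parameter — one instead uses \eqref{C0sob} with $l=m$, i.e. $\Op^{-2L-m}\to UC^m$, bounding the $UC^m$-norm of the kernel by $\n{f(T)}_{-L-\lceil m/2\rceil,\,L+\lceil m/2\rceil}$; choosing $l=L+\lceil m/2\rceil$ gives $|\trG f(T)|_m\le C\,\n{f(T)}_{-l,l}$.

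I do not expect a serious obstacle here: essentially everything is bookkeeping with Sobolev exponents, the Gårding inequality \eqref{Gaa}, and the fundamental-domain description of $\trG$ from \eqref{trace}. The one point that deserves a line of care is the passage through the absolute value — making sure $|f(T)|$, and not merely $f(T)$, is $\G$-trace class rather than only $\G$-Hilbert–Schmidt — which is exactly what the square-root trick $|f|^{1/2}\in RC(\mathbb{R})$ together with the relative compactness of $\mathcal{I}$ takes care of.
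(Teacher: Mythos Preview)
The paper does not actually supply a proof of this lemma: it is stated at the very end of the appendix as an immediate consequence of the preceding material (continuity of $f\mapsto f(T)$ into $\Op^{-\infty}$, the kernel map \eqref{C0sob}, and the trace formula \eqref{trace}), with the $\G$-trace-class property of $\Op_\G^{-\infty}$ taken as given in the statement itself. Your proposal correctly assembles exactly these ingredients and is in full agreement with the paper's intended argument; the square-root trick you add to verify the $\G$-trace-class claim directly is a pleasant bonus that the paper omits.
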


 \end{document}